\let\mathscr\mathcal}}
\font\tencyr=wncyr10
\font\sevencyr=wncyr7
\font\fivecyr=wncyr5
\def\@tocline#1#2#3#4#5#6#7{\relax
  \ifnum #1>\c@tocdepth 
  \else
    \par \addpenalty\@secpenalty\addvspace{#2}%
    \begingroup \hyphenpenalty\@M
    \@ifempty{#4}{%
      \@tempdima\csname r@tocindent\number#1\endcsname\relax
    }{%
      \@tempdima#4\relax
    }%
    \parindent\z@ \leftskip#3\relax \advance\leftskip\@tempdima\relax
    \rightskip\@pnumwidth plus4em \parfillskip-\@pnumwidth
    #5\leavevmode\hskip-\@tempdima
      \ifcase #1
       \or\or \hskip 2em \or \hskip 2em \else \hskip 3em \fi%
      #6\nobreak\relax
\hfill\hbox to\@pnumwidth{\@tocpagenum{#7}}\par
    \nobreak
    \endgroup
  \fi}
\numberwithin{equation}{section}              
\newcommand{\Z}{\mathbb{Z}}
\newcommand{\ZZ}{\mathbb{Z}}                              
\newcommand{\Q}{\mathbb{Q}}
\newcommand{\QQ}{\mathbb{Q}}                              
\newcommand{\FF}{\mathbb{F}}                             
\newcommand{\Gal}{\operatorname{Gal}}
\newcommand{\rank}{\operatorname{rank}}
\newcommand{\isom}{\simeq}
\newcommand{\ph}{\varphi}
\newcommand{\Fr}{\mathrm{Fr}}                        
\newcommand{\defi}[1]{\textsf{#1}}                   
\newtheorem{thm}{Theorem}[section] 
\newtheorem{prop}[thm]{Proposition}
\newtheorem{lem}[thm]{Lemma}
\newtheorem{cor}[thm]{Corollary}
\theoremstyle{definition}
\newtheorem{defn}[thm]{Definition}
\theoremstyle{remark}                                
\newtheorem{remark}[thm]{Remark}                     
\def\gp#1{\langle \, #1 \, \rangle}                                               
\def\order#1{\vert \, #1 \, \vert}
\title[ Decomposition Types in Minimally Tamely Ramified Extensions
of $\Q$]%
{ Decomposition Configuration Types in Minimally Tamely Ramified Extensions
of $\Q$}
\thanks{${}^{\dagger} $This work was supported in part by a grant from NSERC}
\date{}
\author[D.~Dummit and H.~Kisilevsky]{David S. Dummit and Hershy Kisilevsky$^{\dagger}$}
\address[H.~Kisilevsky]%
{Department of Mathematics and Statistics and CICMA\\
     Concordia University \\
     1455 de Maisonneuve  Blvd. West\\
     Montr\'eal, Quebec, H3G 1M8, CANADA}
\email{hershy.kisilevsky@concordia.ca}
\address[D.~Dummit]%
{Department of Mathematics \\ University of Vermont \\ Lord House, 16 Colchester Ave. \\ Burlington, VT 05405, USA}
\email{dummit@math.uvm.edu}
\subjclass[2010]{12F12(primary), 11R18, 11S15(secondary)}
\begin{document}

\date{\today}

\begin{abstract} 

We examine whether it is possible to realize finite groups $G$ as Galois groups of minimally tamely
ramified extensions of $\QQ$ and also specify both the inertia groups and 
the further decomposition of the ramified primes.

\end{abstract}

\maketitle

\tableofcontents

\section{Introduction}\label{Intro}

Let $G$ be a finite group and let $s$, the rank of $G$ ($=\rank(G)$)  denote the minimal number of elements required to {\it normally} 
generate $G$, i.e., $s$ is the minimal number of elements of $G$ which together with all their conjugates in $G$ generate $G.$  
It is known  (\cite{Ku}) that $s$ is the minimal number of generators of the maximal abelian quotient $G/[G,G]$ of $G.$

If $K$ is a finite Galois extension of $\QQ$, then because 
$\QQ$ has no unramified extensions, $\Gal(K/\QQ)$ is generated by the inertia groups for the primes $p$ ramifying in $K$.  
If all of the ramification in $K$ is tame then these inertia groups are cyclic, and taking a generator for one fixed representative
of the (conjugate) inertia groups for each prime $p$ gives a set of normal generators for $\Gal(K/\QQ)$.  It follows that if
a finite group $G$ can be realized as the Galois group of a number field $K$ over $\QQ$ 
having only tame ramification, then $\rank(G)$ is the smallest possible number of primes $p$ that are ramified in $K$.  
In \cite{KS} and \cite{KNS} 
it is shown that all finite nilpotent semi-abelian groups can be realized by such a minimally tamely 
ramified extension over $\Q.$

In this paper we consider the finer question of whether it is possible to realize finite groups $G$ as Galois groups of minimally tamely
ramified extensions of $\QQ$ and also specify both the cyclic subgroups of $G$ arising as the inertia groups and 
the further decomposition of the ramified primes in $K$.

\section{Minimal Tame Ramification and Decomposition Configurations}\label{RamDecomp}

We first make precise what is meant by a specification of the inertia and decomposition groups in a realization
of a finite group $G$ as a Galois group over $\QQ$.

\begin{defn} \label{def:tameramconf}

A (minimal) \defi{tame ramification configuration} 
is a pair $(G,\mathcal T)$, where $G$ is a finite group of rank $s$ and  $\mathcal T$ is a 
(necessarily minimal)
collection 
$\mathcal T= \{T_1, \dots, T_s\}$ of cyclic subgroups of $G$ that normally generate $G$.

\end{defn}

\begin{defn} \label{def:realizedtameramconf}

A tame ramification configuration is \defi{realizable over $\QQ$} if there exists a tamely 
ramified Galois extension $K/\QQ$ and an isomorphism $\ph:\Gal(K/\QQ) \longrightarrow G$ such that  
$\ph(T(\wp_i/(p_i))) =T_i$ for each $i=1, \dots s$, where 
$\{ p_1, \dots, p_s \}$ is the set of all finite primes of $\QQ$ ramified in $K$ and for each $i$, $\wp_i$ is a prime 
of $K$ dividing $(p_i)$ with inertia group $T(\wp_i/(p_i))$ in $\Gal(K/\QQ)$.

\end{defn}

An extension $K/\QQ$ is minimally tamely ramified if and only if it is tamely ramified and is the 
realization of a minimal tame ramification configuration.

With the evident minor modifications one could consider tame ramification configurations that are not minimal (where
the cardinality $n$ of $\mathcal T$ need not be the same as the rank of $G$ and the $T_i$ need not normally generate $G$),
and realizations over number fields $F$ other than $\QQ$.
So, for example, if $\mathcal T = \emptyset$, then $(G, \mathcal T)$ would be realizable over $F$ if and only if 
$F$ has an unramified $G$-extension.  If $F$ has no unramified extensions, as is the case for $\QQ$,
then for a tame ramification configuration $(G, \mathcal T)$ to be realizable, 
the groups $T_i$ must normally generate $G$, and hence  $n \geq s$. 
Our definitions above reflect the fact that we shall only consider minimally tamely ramified
extensions over $\QQ$ in this paper.

\medskip\indent
If $K/\QQ$ is a Galois extension of number fields and $\wp$ is a prime of $K$ lying above the prime $(p)$,
then the decomposition group $Z(\wp/(p))$ for $\wp$ in $\Gal(K/\QQ)$ contains the inertia group $T(\wp/(p))$
as a normal subgroup and the quotient is a cyclic group, which leads to the following definitions.

\begin{defn}  \label{def:tamedecconf}
A (minimal) \defi{tame decomposition configuration} is a triple $(G,\mathcal T, \mathcal Z)$, where $(G, \mathcal T)$ is a 
tame ramification configuration, and
$\mathcal Z$ is a collection $\mathcal Z= \{Z_1, \dots, Z_s\}$ of subgroups of $G$ where $T_i$ is a normal subgroup 
of $Z_i$ and $ Z_i/T_i$ is cyclic.   
\end{defn}

\begin{defn}  \label{def:realizedtamedecconf}
A (minimal) tame decomposition configuration is \defi{realizable over $\QQ$} if there exists a tamely 
ramified Galois extension $K/\QQ$ and an isomorphism $\ph:\Gal(K/\QQ) \longrightarrow G$ such that  
$\ph(T(\wp_i/(p_i))) =T_i$ as in Definition \ref{def:realizedtameramconf}
and $\ph(Z(\wp_i/(p_i))) =Z_i$ for each $i=1, \dots s$, where $Z(\wp_i/(p_i))$ is the
decomposition group for $\wp_i$ in $\Gal(K/\QQ)$.

\end{defn}
Recall that if $K/\QQ$ is a Galois extension and $K_0$ is a subfield of $K$ 
Galois over $\QQ$, then
the inertia and decomposition groups in $\Gal(K/\QQ)$ 
project to the inertia and decomposition groups in $\Gal(K_0/\QQ)$ for the corresponding primes of $K_0$.

\begin{defn} \label{def:quotient}
Call a tame ramification configuration $(H, \mathcal S)$ a \defi{quotient of a tame ramification configuration} $(G, \mathcal T)$  
if $\rank(H)=\rank(G)$ and there is a surjective group homomorphism $\pi:G \longrightarrow H$ such that 
$\pi(T_i)=S_i$ for all $T_i \in \mathcal T$ and all $S_i \in \mathcal S$. 
Similarly, a tame decomposition configuration $(H, \mathcal S, \mathcal W)$ is a \defi{quotient of a tame decomposition 
configuration} $(G, \mathcal T, \mathcal Z)$ if $(H, \mathcal S)$ a quotient of  
$(G, \mathcal T)$ and in addition $\pi(Z_i)=W_i$ for all $Z_i \in \mathcal Z$ and 
$W_i \in \mathcal W$. 
\end{defn}

If $K/\QQ$ is a realization of the tame ramification configuration $(G, \mathcal T)$ 
(respectively, of the tame decomposition configuration $(G, \mathcal T, \mathcal Z)$), 
then $K_0/\QQ$ will be a realization of the tame ramification configuration $(H, \mathcal S)$ (respectively, 
of the tame decomposition configuration $(H, \mathcal S, \mathcal W)$),
where $K_0$ is the subfield of $K$ fixed by subgroup of $\Gal(K/\QQ)$ corresponding (under $\ph$) 
to the kernel of $\pi$, which proves the following.

\begin{prop} \label{prop:quotient}
Suppose $(H, \mathcal S)$ (respectively, $(H, \mathcal S, \mathcal W)$) is a quotient of some tame 
ramification configuration $(G, \mathcal T)$ (respectively, tame decomposition configuration 
$(G, \mathcal T, \mathcal Z)$).  Then 
\begin{enumerate}\itemsep0pt
\item[{(a)}]
if $(H, \mathcal S)$ (resp., $(H, \mathcal S, \mathcal W)$) cannot be realized over $\QQ$, then
neither can $(G, \mathcal T)$ (resp., $(G, \mathcal T, \mathcal Z)$), and
\item[{(b)}]
if $(G, \mathcal T)$ (resp., $(G, \mathcal T, \mathcal Z)$) can be realized over $\QQ$, then
so can $(H, \mathcal S)$ (resp., $(H, \mathcal S, \mathcal W)$)).
\end{enumerate}
\end{prop}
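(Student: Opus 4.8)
The plan is to prove part (b) directly and then observe that part (a) is exactly its contrapositive, so no separate argument is needed. For (b), suppose $(G,\mathcal T)$ is realizable over $\QQ$ (the decomposition case is identical, carrying the extra data $\mathcal Z$ along), and fix a tamely ramified Galois extension $K/\QQ$ together with an isomorphism $\ph:\Gal(K/\QQ)\to G$ realizing it, with ramified primes $p_1,\dots,p_s$ and chosen primes $\wp_i \mid (p_i)$ satisfying $\ph(T(\wp_i/(p_i)))=T_i$. Since $(H,\mathcal S)$ is a quotient of $(G,\mathcal T)$, there is a surjection $\pi:G\to H$ with $\pi(T_i)=S_i$ (and $\pi(Z_i)=W_i$ in the decomposition case). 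I would set $N=\ph^{-1}(\ker\pi)$, a normal subgroup of $\Gal(K/\QQ)$, and let $K_0=K^N$ be its fixed field; this is the candidate realization of $(H,\mathcal S)$.

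Next I would check the three structural properties of $K_0/\QQ$. It is Galois over $\QQ$ because $N$ is normal, and $\ph$ descends to an isomorphism $\bar\ph:\Gal(K_0/\QQ)\isom\Gal(K/\QQ)/N\isom G/\ker\pi\isom H$. It is tamely ramified, since any subextension of a tamely ramified extension is tamely ramified (the relevant ramification indices divide those in $K$, so they remain prime to the residue characteristic). Finally, for each $i$, writing $\wp_i^0=\wp_i\cap K_0$, the recalled fact that inertia and decomposition groups project to inertia and decomposition groups under $\Gal(K/\QQ)\to\Gal(K_0/\QQ)$ gives $\bar\ph(T(\wp_i^0/(p_i)))=\pi(\ph(T(\wp_i/(p_i))))=\pi(T_i)=S_i$, and likewise $\bar\ph(Z(\wp_i^0/(p_i)))=\pi(Z_i)=W_i$ in the decomposition case.

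The main point that still needs care---the one I would regard as the crux---is that the set of primes ramifying in $K_0$ is exactly $\{p_1,\dots,p_s\}$, as required by Definitions \ref{def:realizedtameramconf} and \ref{def:realizedtamedecconf}. Since $K_0\subseteq K$, the ramified primes of $K_0$ form a subset of $\{p_1,\dots,p_s\}$, and $p_i$ ramifies in $K_0$ precisely when its inertia group $S_i=\pi(T_i)$ is nontrivial. I would rule out $S_i=\{1\}$ using the rank hypothesis built into the definition of a quotient: because $(H,\mathcal S)$ is a tame ramification configuration, the $S_i$ normally generate $H$ and the cardinality of $\mathcal S$ equals $\rank(H)=\rank(G)=s$, which is minimal. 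If some $S_i$ were trivial, the remaining $s-1$ subgroups would normally generate $H$, contradicting the minimality of $\rank(H)$. Hence every $S_i$ is nontrivial, all of $p_1,\dots,p_s$ ramify in $K_0$, and $K_0/\QQ$ is a genuine realization of $(H,\mathcal S)$ (resp.\ of $(H,\mathcal S,\mathcal W)$). This establishes (b), and (a) follows by contraposition.
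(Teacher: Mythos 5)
Your proof is correct and takes essentially the same route as the paper: pass to the fixed field $K_0$ of $\ph^{-1}(\ker\pi)$, note $K_0/\QQ$ is Galois and tamely ramified, and use the recalled fact that inertia and decomposition groups project onto those of the quotient, with (a) following from (b) by contraposition. The paper leaves implicit the verification that all $s$ primes remain ramified in $K_0$; your explicit argument via the condition $\rank(H)=\rank(G)$ in the definition of a quotient is a worthwhile addition but not a different approach.
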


\section{Decomposition Configurations in Finite Abelian Groups}   \label{Abelian}

The rank $s$ of a finite abelian group $G$ is the number of cyclic factors in its invariant factor decomposition,
and a tame ramification configuration is simply a specification of $s$ generators, $x_1, \dots , x_s$ of $G$, the minimum possible. 
Then a tame decomposition configuration is an additional choice of elements $z_1, \dots , z_s \in G$.   

We first use Proposition \ref{prop:quotient} to show that tame decomposition configurations can be realized for 
abelian groups if they can be realized for the particular abelian groups $(\ZZ/n\ZZ)^s$.

\begin{prop} \label{lift}
Let $H$ be a finite abelian group with $\rank(H)=s$, and let $(H,\mathcal S, \mathcal W)$ be a tame 
decomposition configuration.   Then there is a tame decomposition configuration $(G,\mathcal T, \mathcal Z)$ 
with $G = (\Z/n\Z)^s$ that has $(H,\mathcal S, \mathcal W)$ as a quotient.   In particular, if
$(H,\mathcal S)$ is a tame ramification configuration, then 
there is a tame ramification configuration $(G,\mathcal T)$ with $G = (\Z/n\Z)^s$ which has $(H,\mathcal S)$ as a quotient.

\end{prop}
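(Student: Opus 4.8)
The plan is to realize $(G,\mathcal T,\mathcal Z)$ completely explicitly, taking $G=(\Z/n\Z)^s$ with $n$ the exponent of $H$ and letting the surjection $\pi\colon G\to H$ demanded by Definition \ref{def:quotient} simply send the standard basis of $G$ to a chosen set of generators of $H$. Concretely, I would first pick for each $i$ a generator $x_i$ of the cyclic group $S_i$. Since the $S_i$ normally generate the abelian group $H$, the elements $x_1,\dots,x_s$ generate $H$, and as $\rank(H)=s$ they form a minimal generating set. With $n=\exp(H)$ one has $n\,x_i=0$ for every $i$, so the assignment $e_i\mapsto x_i$ on the standard basis $e_1,\dots,e_s$ of $G=(\Z/n\Z)^s$ extends to a well-defined homomorphism $\pi\colon G\to H$, and $\pi$ is surjective precisely because the $x_i$ generate $H$. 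Note that $\rank(G)=s=\rank(H)$, as required for a quotient.

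For the ramification configuration I would set $T_i=\langle e_i\rangle$ and $\mathcal T=\{T_1,\dots,T_s\}$. These subgroups are cyclic and generate $G$, so (their number being $\rank(G)=s$) they form a minimal normal generating set and $(G,\mathcal T)$ is a tame ramification configuration. Since $\pi(T_i)=\langle\pi(e_i)\rangle=\langle x_i\rangle=S_i$, the map $\pi$ exhibits $(H,\mathcal S)$ as a quotient of $(G,\mathcal T)$; this already yields the final ``in particular'' assertion. For the decomposition data I would, for each $i$, choose $z_i\in W_i$ mapping to a generator of the cyclic quotient $W_i/S_i$, so that $W_i=\langle x_i,z_i\rangle$, then use the surjectivity of $\pi$ to pick a lift $\tilde z_i\in G$ with $\pi(\tilde z_i)=z_i$, and set $Z_i=\langle e_i,\tilde z_i\rangle$ and $\mathcal Z=\{Z_1,\dots,Z_s\}$. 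Because $G$ is abelian, $T_i\trianglelefteq Z_i$, and $Z_i/T_i$ is cyclic (generated by the image of $\tilde z_i$), so $(G,\mathcal T,\mathcal Z)$ is a tame decomposition configuration; finally $\pi(Z_i)=\langle x_i,z_i\rangle=W_i$, and together with $\pi(T_i)=S_i$ this shows that $(H,\mathcal S,\mathcal W)$ is a quotient of $(G,\mathcal T,\mathcal Z)$.

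I do not expect a serious obstacle here: once the lifts $T_i$ are taken to be the standard basis of $G$, the normal generation and minimality of $\mathcal T$ are automatic, and the cyclicity of $Z_i/T_i$ is built in. The only points genuinely requiring care are the choice of $n$ (large enough to make $\pi$ well defined, for which $n=\exp(H)$---equivalently the largest invariant factor of $H$---suffices) and the routine verification that the constructed $(G,\mathcal T)$ and $(G,\mathcal T,\mathcal Z)$ meet the conditions of Definitions \ref{def:tameramconf} and \ref{def:tamedecconf}. I would double-check in particular that no hidden relations among the $x_i$ force $\pi$ to be ill defined, but this is exactly what $n\,x_i=0$ guarantees.
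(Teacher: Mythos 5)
Your proposal is correct and follows essentially the same route as the paper: generators $x_i$ of $S_i$ and elements $z_i$ with $W_i=\langle x_i,z_i\rangle$ are lifted to the standard basis vectors and to preimages in $(\Z/n\Z)^s$, with $T_i$ and $Z_i$ defined as the subgroups they generate. The only cosmetic differences are that you fix $n=\exp(H)$ (the paper allows any multiple of the exponent, a flexibility it uses later) and you define $\pi$ directly on $(\Z/n\Z)^s$ rather than factoring a map $\Z^s\to H$ through the quotient.
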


\begin{proof}
  
For each $i=1,\dots,s$ let $x_i \in S_i$ be a generator of the cyclic group $S_i \in \mathcal S$ and suppose 
the decomposition group $W_i \in \mathcal W$ is generated by $\{ x_i, z_i \}$. 
Let $\psi:\Z^s \longrightarrow H$ 
be the group homomorphism defined by  $\psi(\epsilon_i)=x_i$, where $\epsilon_i=(0,\dots,0,1,0,\dots,0)$ with $1$ in the 
$i^{\rm{th}}$ position. For each $i = 1, \dots, s$, choose $\eta_i \in \Z^s$ so that $\psi(\eta_i)=z_i$.
Fix any positive integer $n$ divisible by the exponent of $H$,
so that $n\cdot x=0$ for all $x \in H$, and let
$f:\Z^s \longrightarrow G = \Z^s/n\Z^s$ be the natural projection.  
Then $n\Z^s$ is contained in the kernel of $\psi$, 
so $\psi$ factors through $f$ and induces a surjective homomorphism  $\pi=\overline\psi:G  \longrightarrow H$. 
For $i = 1, \dots, s$, let  $T_i = \langle f(\epsilon_i) \rangle $ be the subgroup of $G$
generated by  $ f(\epsilon_i)$ and let $Z_i = \langle f(\epsilon_i),f(\eta_i) \rangle$ 
be the subgroup of $G$ generated by  $ f(\epsilon_i)$ and $f(\eta_i)$.
If $\mathcal T = \{ T_1, \dots , T_s \}$ and  $\mathcal Z = \{ Z_1, \dots , Z_s \}$, then
$(G,\mathcal T, \mathcal Z)$ is a tame decomposition configuration with 
$\rank G = s = \rank H$ that has 
$(H,\mathcal S, \mathcal W)$ as a quotient.
\end{proof}
 
It follows from Propositions \ref{prop:quotient} and \ref{lift} that 
to construct a realization for the tame ramification configuration $(H,\mathcal S)$ or for the 
tame decomposition configuration $(H,\mathcal S, \mathcal W)$ for an abelian group
$H$, it would suffice if we could
construct a realization for all possible configurations for the group $G = (\Z/n\Z)^s$.
We shall see that this can be done for {\it ramification\/} configurations (see
Theorem \ref{thm:split})
but not in general for all possible {\it decomposition\/} configurations.

If $G = (\Z/n\Z)^s$, each $T_i \in \mathcal T$ has order at most $n$, so the fact that $\rank G = s$ and
$G$ is generated by the $T_i$ implies that $T_i \simeq Z/n\Z$ for all $i = 1, \dots, s$
and that $G = T_1 \times \dots \times T_s$ in Proposition \ref{lift} (so, up to an evident equivalence, there
is essentially only one tame ramification configuration for $(\Z/n\Z)^s$).
Then if $K/\QQ$ is a realization of $(G, \mathcal T)$,
taking the fixed fields of the subgroups $T_1 \times \dots \times T_{i-1} \times T_{i+1} \times \dots T_s$
for $i = 1, \dots s$ shows that $K$ would be the composite of cyclic extensions of degree $n$, 
each of which is totally and tamely ramified at one odd prime and otherwise unramified at finite primes.  
These extensions are the cyclic subextensions of prime cyclotomic fields:

\begin{defn}  \label{def:Kp}
If $n$ is a positive integer and $p$ is a prime with $p \equiv 1$ mod $n$, let $K_n(p)$ 
denote the subfield of degree $n$ contained in the cyclotomic field 
of $p^\textup{th}$ roots of unity.
\end{defn}

If $\zeta_p$ is any primitive $p^\textup{th}$ root of unity, the Galois group of $\QQ(\zeta_p)/\QQ$ is
canonically isomorphic to $(\ZZ /  p\ZZ)^{\times}$ under the map sending $a \in \ZZ$ to the automorphism
$\sigma_a : \zeta_p \mapsto \zeta_p^a$.  The Galois group $\Gal(K_n(p)/\QQ)$ of the unique subfield of 
degree $n$ in $\QQ(\zeta_p)$ is then 
isomorphic to $(\ZZ /  p\ZZ)^{\times} / (\ZZ /  p\ZZ)^{\times n}$.  This latter group is (non-canonically) isomorphic
to $\ZZ / n \ZZ$, with an isomorphism obtained by choosing a generator for the cyclic group $(\ZZ /  p\ZZ)^{\times}$, i.e., by
choosing a primitive root $g$ modulo $p$.  Then
\begin{equation} \label{eq:Kpisom}
\Gal(K_n(p)/\QQ) = \langle \tau_g \rangle ,
\end{equation}
where $\tau_g$ is the restriction to $K_n(p)$ of the automorphism $\sigma_g \in \Gal(\QQ(\zeta_p)/\QQ)$.  
In particular, if $l$ is a prime distinct from $p$, then 
since $\sigma_l$ is the Frobenius automorphism for $(l)$ in $\Gal(\QQ(\zeta_p)/\QQ)$, whose
restriction to $K_n(p)$ is the Frobenius automorphism for $(l)$ in $\Gal(K_n(p)/\QQ)$, we see that
\begin{equation}  \label{eq: frobinKp} 
\Fr_{K_n(p)/\QQ} (l) = \tau_g^{b} \in \Gal(K_n(p)/\QQ) \quad \text{if} \quad l \equiv g^b \mod p .
\end{equation}

By Dirichlet's Theorem, there exist distinct primes $l_1, \dots , l_s$ each of which is congruent to 1 mod $n$.
By the remarks above, the composite $K = K_n(l_1) \cdots K_n(l_s)$
is then a realization over $\QQ$ of the tame ramification
configuration $(G,\mathcal T)$ with $G=(\Z/n\Z)^s$ and $T_i = \Z/n\Z$ for $i=1,\dots,s$.
With little additional effort we can do slightly more.  We first 
give a name to the decomposition configurations corresponding to the situation
in which the ramifying primes are otherwise totally split:

\begin{defn} \label{def:splitdec}
A tame decomposition configuration $(G, \mathcal T, \mathcal Z)$ is a (minimal) \defi{tame split decomposition 
configuration} if $T_i=Z_i$ for $1 \leq i \leq s.$  
\end{defn}

The field $K = K_n(l_1) \cdots K_n(l_s)$ considered above will be
a realization over $\QQ$ of the tame split decomposition
configuration $(G,\mathcal T, \mathcal Z)$ where $G=(\Z/n\Z)^s$ and $Z_i=T_i =\Z/n\Z$ for $i=1,\dots,s$ if the prime
$l_i$ is totally split in every field $K_n(l_j)$ with $i \neq j$. 
This can be arranged by choosing the sequence of primes $l_1,l_2,\dots $ inductively, as
follows.  Begin with any prime $l_1$ with $l_1 \equiv 1 \mod n$.  Suppose inductively for $1 \le t < s$
that  $\{l_1,l_2,\dots, l_t\}$ are distinct 
primes congruent to 1 modulo $n$ satisfying the condition that each $l_i$ is 
totally split in every field $K_n(l_j)$ with $j \neq i$.  It would suffice to find a prime $l_{t+1}$ with 
$l_{t+1} \equiv 1 \pmod {nl_1l_2 \dots l_t}$ (which implies both that $l_{t+1} \equiv 1 \mod n$ and that
$l_{t+1}$ splits completely in each $K_n(l_i)$ for $i \le t$) such that also $l_1, \dots, l_t$ each split completely 
in $K_n(l_{t+1})$.  That is, we want $l_{t+1} \equiv 1 \pmod{nl_1l_2\cdots l_t}$ and, 
by equation \eqref{eq: frobinKp}, for each $1 \leq i \leq t$,
$l_i \equiv y_i^n \pmod {l_{t+1}}$ for some $y_i$. 
These conditions are satisfied for any prime  $l_{t+1}$ which splits completely in the field 
$\Q(\zeta_{nl_1l_2\cdots l_t}, l_1^{1/n},l_2^{1/n},\dots,l_t^{1/n})$, and Chebotarev's density theorem ensures that there are 
infinitely many such primes.   

The proof of Proposition \ref{lift} shows that every 
tame split decomposition configuration $(H,\mathcal S, \mathcal W)$ is the quotient of a 
tame split decomposition configuration with $G=(\Z/n\Z)^s$ (take $\eta_i = 0$ in the proof), which we have
just seen can all be realized over $\QQ$.  By Proposition  \ref{prop:quotient} this proves the following theorem.

\begin{thm} \label{thm:split}
Every tame split decomposition configuration for a finite abelian group $G$ can be
realized over $\QQ$.  In particular, every tame ramification configuration for $G$
can be realized over $\QQ$.

\end{thm}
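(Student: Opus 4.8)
The plan is to reduce the general finite abelian case to the single family $G=(\Z/n\Z)^s$ and then exhibit an explicit realization for those groups. Given an arbitrary tame split decomposition configuration $(H,\mathcal S,\mathcal W)$ with $\rank(H)=s$, I would run the construction in the proof of Proposition~\ref{lift} with the choice $\eta_i=0$ for every $i$. Then $z_i=\psi(0)=0$ forces $W_i=\langle x_i\rangle=S_i$, and likewise $Z_i=\langle f(\epsilon_i)\rangle=T_i$, so the resulting configuration $(G,\mathcal T,\mathcal Z)$ with $G=(\Z/n\Z)^s$ (for $n$ any multiple of the exponent of $H$) is again a tame \emph{split} decomposition configuration, and $(H,\mathcal S,\mathcal W)$ is a quotient of it. By Proposition~\ref{prop:quotient}(b) it therefore suffices to realize every tame split decomposition configuration for $(\Z/n\Z)^s$ over $\QQ$.

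For $G=(\Z/n\Z)^s$ the configuration is essentially forced: each $T_i$ has order dividing $n$ and the $s$ groups $T_i$ generate a group of rank $s$, so $T_i\simeq\Z/n\Z$ and $G=T_1\times\dots\times T_s$, while the split condition requires $Z_i=T_i$. I would build $K=K_n(l_1)\cdots K_n(l_s)$ from the cyclic degree-$n$ fields of Definition~\ref{def:Kp}, for distinct primes $l_i\equiv 1\pmod n$. Identifying $\Gal(K_n(l_i)/\QQ)$ with $T_i$ via \eqref{eq:Kpisom}, each $K_n(l_i)/\QQ$ is totally and tamely ramified at $l_i$ and unramified at every other finite prime, so $K/\QQ$ realizes the ramification configuration. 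To obtain the split decomposition configuration I additionally need each $l_i$ to split completely in the complementary composite $\prod_{j\neq i}K_n(l_j)$; by \eqref{eq: frobinKp} this amounts to requiring that each $l_i$ be an $n$-th power modulo every other $l_j$.

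The step I expect to be the main obstacle is arranging all of these mutual splitting conditions at once, and I would do so by choosing the primes inductively. Beginning with any $l_1\equiv 1\pmod n$, suppose $l_1,\dots,l_t$ have been chosen so that each $l_i$ is totally split in $K_n(l_j)$ for every $j\neq i$ with $i,j\le t$. To select $l_{t+1}$ I would take any prime that splits completely in
\[
\Q\bigl(\zeta_{nl_1l_2\cdots l_t},\, l_1^{1/n},\, l_2^{1/n},\,\dots,\, l_t^{1/n}\bigr).
\]
Splitting in the cyclotomic part forces $l_{t+1}\equiv 1\pmod{nl_1\cdots l_t}$, which makes $l_{t+1}\equiv 1\pmod n$ and guarantees $l_{t+1}$ splits completely in each earlier $K_n(l_i)$; splitting in the Kummer part forces each $l_i$ with $i\le t$ to be an $n$-th power modulo $l_{t+1}$, hence totally split in $K_n(l_{t+1})$ by \eqref{eq: frobinKp}. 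Since this is a fixed finite extension of $\QQ$, Chebotarev's density theorem supplies infinitely many such primes, so the induction produces the required $s$ primes and the resulting $K$ realizes the split decomposition configuration for $(\Z/n\Z)^s$.

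Finally, for the \emph{in particular} clause I would observe that any tame ramification configuration $(H,\mathcal S)$ extends trivially to the tame split decomposition configuration $(H,\mathcal S,\mathcal S)$ by setting $W_i=S_i$; a realization of the latter is automatically a realization of $(H,\mathcal S)$.
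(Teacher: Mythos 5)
Your proposal is correct and follows essentially the same route as the paper: reduce to $G=(\Z/n\Z)^s$ via Proposition~\ref{lift} with $\eta_i=0$ and Proposition~\ref{prop:quotient}, then choose the primes $l_1,\dots,l_s$ inductively so that each splits completely in $\Q(\zeta_{nl_1\cdots l_t},l_1^{1/n},\dots,l_t^{1/n})$, invoking Chebotarev. The only cosmetic difference is that the paper cites Dirichlet for the initial existence of the primes and treats the ``in particular'' clause as immediate, exactly as you do.
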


This theorem shows that every finite abelian group arises as the Galois group of
a tamely ramified extension of $\QQ$ with a minimal set of ramifying primes (which as
mentioned in the Introduction is already an easy special case of the results in \cite{KS} and \cite{KNS}),
with the added feature that the inertia groups for the ramifying 
primes can be taken to be any collection of cyclic subgroups that minimally generate $G$, and
where the ramified primes are otherwise completely split in the extension.

If now $(G, \mathcal T, \mathcal Z)$, with $\mathcal T=\{T_1,\dots,T_s\}$ and $\mathcal Z=\{Z_1,\dots,Z_s\}$ 
is a decomposition configuration 
with $G=(\Z/n\Z)^s$, then as
noted above we have $G= T_1 \times T_2 \times \dots \times T_s$ where each $T_i$ is
cyclic of order $n$; any realization over $\QQ$ must necessarily be the composite
of fields $K_n(l_i)$ for distinct primes $l_i \equiv 1 \pmod n$, whose additional
splitting information is given by the groups $Z_i$.  
If $x_i$ is a generator for $T_i$ for $i = 1, \dots , s$, then this additional 
decomposition information is determined by choosing a Frobenius element $z_i \in Z_i$ such that
$Z_i = \langle x_i, z_i \rangle$.  Since $G = T_1 \times \dots \times T_s$, we may,
by adjusting $z_i$ by a multiple of $x_i$ if necessary, write 
$z_i$ in terms of the generators $x_j$ for $j \ne i$:
\begin{equation}  \label{eq:ziingeneral}
z_i = \prod_{j=1}^s x_j^{a_{ij}}   \quad \text{with} \quad a_{ij} \in \Z/n\Z \quad \text{and} \quad a_{ii} = 0 .
\end{equation}
Conversely, given exponents $a_{ij}$ with $a_{ii} = 0$, we can define $z_i$ by equation \eqref{eq:ziingeneral}
to give a decomposition configuration with $Z_i = \langle x_i, z_i \rangle$.  
The associated $s \times s$ matrix
\begin{equation} \label{eq:decompmatrix}
M_{ \mathcal Z} =
\begin{pmatrix}
0 & a_{12} & \dots & a_{1s} \\
a_{21} & 0 & \dots & a_{2s} \\
\vdots & \vdots & \ddots & \vdots \\
a_{s1} & a_{s2} & \dots & 0 
\end{pmatrix} , \quad a_{ij} \in \Z/n\Z, \quad a_{ii} = 0,
\end{equation} 
attached to the decomposition configuration $(G=(\Z/n\Z)^s$, $\mathcal T=\{T_1,\dots,T_s\}$,
$\mathcal Z=\{Z_1,\dots,Z_s\}$)
through equation \eqref{eq:ziingeneral}
and the choice of generators $x_1, \dots x_s$ for the $T_i$ therefore
carries all of the additional decomposition information for the configuration.
The matrix $M_{ \mathcal Z}$ is, of course, generally not unique---for example, we can multiply any row or 
column by an element in $(\Z/n\Z)^*$ (corresponding to a change of generator $x_i$ or $z_i$).
A realization of this configuration involves finding the primes $l_i$ whose Frobenius is given
by \eqref{eq:ziingeneral} and \eqref{eq:decompmatrix}.

Theorem \ref{thm:split} proves that all split decomposition configurations (whose associated matrix
\eqref{eq:decompmatrix} is simply the zero matrix) can be realized over $\QQ$.  
We shall see in the next subsection that, even for abelian groups, decomposition configurations 
that are not split may not be realizable over $\QQ$.

\subsection{Finite Abelian 2-Groups}  \label{abelian2}
 
In this subsection we consider tame decomposition configurations for abelian groups of
2-power order.  We begin with the case $G=(\Z/2\Z)^s$ of a finite elementary abelian $2$-group.

Suppose $(G, \mathcal T, \mathcal Z)$ is a tame decomposition configuration 
with $G=(\Z/2\Z)^s$, $\mathcal T=\{T_1,\dots,T_s\}$ and $\mathcal Z=\{Z_1,\dots,Z_s\}$.  In this case
the generators $x_i$ for $T_i$ are unique, as are the elements $z_i$ in equation \eqref{eq:ziingeneral},
and the elements $a_{ij}$ in the associated matrix $M_{\mathcal Z}$ in equation \eqref{eq:decompmatrix} can be 
taken to be in $\{0,1\}$.  We convert the nondiagonal elements of $M_{\mathcal Z}$ to $\{\pm 1\}$ instead of
$\{0,1\}$ by defining $S_{\mathcal Z} = \big( m_{ij}\big)$ to be the 
$s \times s$ matrix such that $m_{ii}=0$ and $m_{ij}=(-1)^{a_{ij}}$ for $i\neq j.$  

\begin{defn} 
An $s\times s$ matrix $M$ is a \defi{sign matrix} if it has diagonal entries equal to $0$, and 
off-diagonal entries equal to $\pm 1.$ 
\end{defn}

\begin{defn}
A sign matrix $M$ is a \defi{QR (quadratic residue) matrix} if  $M=\big( m_{ij}\big)$ where $m_{ij} =\big(\frac{p_i}{p_j}\big)$ is
given by the Legendre symbols for some set of distinct odd primes $\{p_1,\dots,p_s\}$.
\end{defn}
   
\begin{thm} \label{thm:exponent2}
With notation as above, the tame decomposition configuration $(G, \mathcal T, \mathcal Z)$ for $G=(\Z/2\Z)^s$
is realizable over $\Q$ if and only if the corresponding sign matrix
$S_{\mathcal Z}$ is a QR matrix. For $s \le 2$, every
tame decomposition configuration is realizable over $\Q$, but for
$s \geq 3$ there exist tame decomposition configurations that cannot be realized over $\Q.$ 
\end{thm}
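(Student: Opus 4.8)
The plan is to convert realizability into a single arithmetic condition on Legendre symbols and then to analyze that condition, treating the three assertions in turn.

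First I would pin down what a realization must look like. As already observed in the discussion preceding \eqref{eq:ziingeneral}, since $G=(\Z/2\Z)^s$ has rank $s$ and the $T_i$ are cyclic of order $2$, any realization $K/\Q$ is forced to be the composite $K=K_2(l_1)\cdots K_2(l_s)$ for distinct odd primes $l_1,\dots,l_s$, with $T_i$ the inertia group at $l_i$. Because $K_2(l_i)\subset\Q(\zeta_{l_i})$ is ramified only at the odd prime $l_i$, the decomposition group $Z_i$ modulo the inertia $T_i$ is generated by the Frobenius of $l_i$ in the maximal subextension unramified at $l_i$, namely $\prod_{j\ne i}K_2(l_j)$. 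By \eqref{eq: frobinKp} with $n=2$, the restriction of that Frobenius to $K_2(l_j)$ is trivial exactly when $l_i$ is a square mod $l_j$, i.e. when $\legendre{l_i}{l_j}=1$. Comparing with $z_i=\prod_{j\ne i}x_j^{a_{ij}}$, this gives $(-1)^{a_{ij}}=\legendre{l_i}{l_j}$, so that $S_{\mathcal Z}=\big(\legendre{l_i}{l_j}\big)$. Conversely, if $S_{\mathcal Z}$ is the QR matrix of primes $p_1,\dots,p_s$, the same computation shows $K=K_2(p_1)\cdots K_2(p_s)$ realizes $(G,\mathcal T,\mathcal Z)$. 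This establishes the equivalence: the configuration is realizable iff $S_{\mathcal Z}$ is a QR matrix.

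For $s\le 2$ I would show every sign matrix is a QR matrix. The only relation constraining the off-diagonal entries is quadratic reciprocity, which gives the \emph{defect} identity $m_{ij}m_{ji}=\legendre{p_i}{p_j}\legendre{p_j}{p_i}=(-1)^{(p_i-1)(p_j-1)/4}$. The case $s=1$ is vacuous. For $s=2$ there are four sign matrices; fixing $p_1\equiv 1$ or $3\pmod 4$ and using Dirichlet's theorem to prescribe $p_2$ modulo $4p_1$ (hence to prescribe $\legendre{p_2}{p_1}$), reciprocity then determines $\legendre{p_1}{p_2}$, and one checks directly that all four patterns $(m_{12},m_{21})$ occur. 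Hence every tame decomposition configuration is realizable for $s\le 2$.

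For $s\ge 3$ I would produce a sign matrix that is not a QR matrix, using the defect identity as the obstruction. If $M$ is the QR matrix of $p_1,\dots,p_s$ and $R=\{\,i:p_i\equiv 3\pmod 4\,\}$, then $m_{ij}m_{ji}=-1$ holds exactly when $i,j\in R$; thus the set of pairs with defect $-1$ must equal $\binom{R}{2}$ for a single subset $R$. I would then take the configuration whose sign matrix has the $3\times 3$ block $\left(\begin{smallmatrix}0&1&1\\-1&0&1\\1&-1&0\end{smallmatrix}\right)$, extended so that all remaining defects are $+1$. Here $m_{12}m_{21}=m_{23}m_{32}=-1$ but $m_{13}m_{31}=+1$, a pattern no subset $R$ can produce, since the first two defects force $1,2,3\in R$ and hence $m_{13}m_{31}=-1$. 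Therefore $M$ is not a QR matrix and the configuration is not realizable; for $s>3$ this follows either from the same direct construction or from Proposition \ref{prop:quotient}(a) applied to the quotient onto the first three coordinates.

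The main obstacle is the final part: recognizing that quadratic reciprocity forces the anti-symmetry pattern $m_{ij}m_{ji}$ to be governed by the single subset $R$ of primes congruent to $3\pmod 4$, and then exhibiting a sign matrix whose pattern cannot so arise. By contrast, the forward translation (a realization as a composite of the fields $K_2(l_i)$ together with the Frobenius computation via \eqref{eq: frobinKp}) and the $s\le 2$ existence via Dirichlet's theorem are routine.
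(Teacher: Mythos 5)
Your proof is correct, and the central equivalence (realizable over $\Q$ iff $S_{\mathcal Z}$ is a QR matrix) is established essentially as in the paper: any realization is forced to be $K_2(l_1)\cdots K_2(l_s)$ for distinct odd primes, and the Frobenius computation via \eqref{eq: frobinKp} identifies $S_{\mathcal Z}$ with the Legendre-symbol matrix $\big(\legendre{l_i}{l_j}\big)$. Where you genuinely diverge is the non-realizability for $s\ge 3$. The paper invokes the characterization from \cite{DDK} --- a sign matrix is a QR matrix iff the diagonal of $S^2$ has a prescribed shape --- and checks that a specific $3\times 3$ matrix fails that test. You instead extract only the \emph{necessary} condition coming from quadratic reciprocity: the set of pairs with $m_{ij}m_{ji}=-1$ must be exactly $\binom{R}{2}$ for the single subset $R=\{i: p_i\equiv 3\ (\mathrm{mod}\ 4)\}$, and you exhibit a matrix whose antisymmetry pattern is a path on $\{1,2,3\}$ rather than a clique. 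This is self-contained and more elementary --- a one-way obstruction suffices here, so you avoid citing the full DDK theorem --- at the cost of not characterizing which sign matrices are QR matrices; the paper's citation buys the complete ``if and only if,'' which it then reuses for the asymptotic count of realizable configurations and in Corollary \ref{cor:exponent2index}. Your explicit Dirichlet-plus-reciprocity argument for $s\le 2$ is also fine (the paper merely asserts that all $2\times 2$ sign matrices are QR matrices). One small caveat: your fallback for $s>3$ via Proposition \ref{prop:quotient}(a) applied to ``the quotient onto the first three coordinates'' does not literally fit Definition \ref{def:quotient}, which requires $\rank(H)=\rank(G)$; projecting $(\Z/2\Z)^s$ onto $(\Z/2\Z)^3$ drops the rank and sends $T_4,\dots,T_s$ to the trivial group. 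Your primary route --- padding the $3\times 3$ block with $+1$'s so that the defect pattern is still not of the form $\binom{R}{2}$ --- is correct and should be taken as the argument for general $s\ge 3$.
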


\begin{proof} 
Since $n = 2$, a realization $K/\Q$ of $(G, \mathcal T, \mathcal Z)$ would be given by the composite field 
$K = K_2(l_1) \cdots K_2(l_s)$ for distinct odd primes $l_1, \dots , l_s$.  Here $K_2(l_i)$ is the quadratic
subfield of the ${l_i}^\textup{th}$ roots of unity, so $K_2(l_i) = \Q(\sqrt{l_i^*})$ where $l_i^* = (-1)^{(l_i-1)/2} l_i$.  
Then for $i\neq j$, $m_{ij}= +1$ in $S_{\mathcal Z}$ if and only if $a_{ij}=0$, i.e., if and only if 
$Z_i \subseteq \Gal(K/K_2(l_j))=T_1 \times \dots T_{j-1} \times T_{j+1}\times\dots \times T_s$, hence if and
only if $l_i$ splits in $K_2(l_j)$.  Since $l_i$ splits in $K_2(l_j)$ if and only if
$\big(\frac{l_j^\ast}{l_i}\big)= \big(\frac{l_i}{l_j}\big) = +1$, it follows that $m_{ij} = \big(\frac{l_i}{l_j}\big)$
so $S_{\mathcal Z}$ is precisely the QR matrix for the primes $\{l_1,\dots,l_s\}$.

Conversely, suppose $S_{\mathcal Z}$ is a QR matrix given by the Legendre symbols for the distinct odd primes $\{l_1,\dots,l_s\}$.
Then by the equivalences in the previous paragraph, the extension
$K=\Q( \sqrt{l_1^\ast},\dots,\sqrt{l_s^\ast})$ is a realization over $\QQ$ of $(G, \mathcal T, \mathcal Z)$.

In \cite{DDK} it is shown that an $s \times s$ sign matrix $S$ is a QR matrix if and only if the diagonal entries of $S^2$ consist 
of $s-k$ 
occurrences 
of $s-1$ and $k$ 
occurrences 
of $s-2k+1$ for some integer $k$ with $1 \leq k\leq s$.  If 
\begin{equation*}
S=
\begin{pmatrix*}[r]
0 &  -1  &  -1  \\
-1 & 0  &  -1  \\
1 &  1 &  0 
\end{pmatrix*}
\end{equation*}
then  $S^2$ has diagonal entries $0,0$ and $-2$ and hence is not a QR matrix. The corresponding
tame decomposition configuration $(G, \mathcal T, \mathcal Z)$ with $G=(\Z/2\Z)^3, T_i= \langle x_i \rangle , 1\leq i \leq 3, $ 
and $Z_1=\langle x_1,x_2 x_3 \rangle, Z_2=\langle x_2,x_1 x_3 \rangle $ and $Z_3=\langle x_3 \rangle $ cannot be realized over $\Q$. 
In a similar way there are, for any $s\geq 3$, tame decomposition configurations that are not realizable over $\Q.$
For $s \le 2$, all sign matrices are QR matrices, so all tame decomposition configurations 
are realizable over $\QQ$. 
\end{proof}
 
Since a tame decomposition configuration  $(G, \mathcal T, \mathcal Z)$ is invariant under permutations 
of the indices $1 \leq i \leq s,$ we only consider the matrices $S_{\mathcal Z}$ up to conjugation by 
$s \times s$ permutation matrices.  If $S_{\mathcal Z}$ is a QR matrix and 
$K=\Q( \sqrt{l_1^*},\dots,\sqrt{l_s^*})$ is a realization over $\QQ$, this corresponds to a permutation of the primes $l_i$.  

The number of such permutation classes of $s \times s$ sign matrices is at least $2^{s^2-s}/s!$ which is greater than 
$2^{s^2(1-\delta)}$ for any $\delta >0$ as $s\to \infty$ by Sterling's formula. On the other hand, as in \cite{DDK}, 
every permutation class of 
$s \times s$ QR matrices contains a (generally non-unique) block matrix of the (``reduced'') form
\begin{equation} \label{eq:reducedform}
\begin{pmatrix}
A &  B   \\
B^t & S  
\end{pmatrix}
\end{equation}
where $A$ is a $k \times k$ skew-symmetric sign matrix, $S$ is an $(s-k) \times (s-k)$ symmetric sign matrix,  
$B$ is a $k \times (s-k)$
 matrix all of whose entries are $\pm 1$ and $B^t$ is the transpose of $B.$ But every such matrix is determined 
by its entries above the diagonal and the integer $ k, 1 \leq k \leq s.$ Therefore the number of permutation classes of $s \times s$ 
QR matrices is at most 
 $s2^{(s^2-s)/2}<2^{s^2(1+\delta)/2}$ as $s \to \infty.$ Hence
 \[
 \frac{\#\{{\rm{permutation\  classes \ of}}\  s \times s \ 
{\rm{ QR\  matrices}}\}}{\#\{{\rm{permutation\  classes\  of}}\  s \times s \ {\rm{ sign\  matrices}}\}}
  < \frac{1}{2^{s^2(1-\delta)/2}}
 \]
for any $\delta >0,$  as $s \to \infty.$ Therefore the proportion of {\it realizable\/} tame decomposition 
configurations over $\Q$ to {\it all possible\/} tame decomposition configurations becomes vanishingly small as $s \to \infty.$
 
These results show that in general it is not possible in minimally tamely ramified multiquadratic extensions of $\QQ$ to specify
the further splitting of the ramified primes arbitrarily (even rapidly less possible as the number of
ramifying primes increases)---this splitting data must give rise to a QR matrix.  The next result
shows that it {\it is\/} possible to at least specify the inertia {\it indices\/} for the ramifying primes, i.e., in the
usual terminology, we may specify ``$f = 1$ or 2'' arbitrarily for the $s$ tamely ramified primes:

\begin{cor} \label{cor:exponent2index}
For any integer $r$ with $0 \le r \le s$, there is a multiquadratic extension $K/\Q$ of degree $2^s$ in which precisely $s$
primes ramify and ramify tamely (so have ramification index $e = 2$), and precisely $r$ of the ramified primes have 
inertial degree $f$ equal to 2 and the remaining $s-r$ ramified primes are otherwise totally split in $K$ (i.e., have $f = 1$). 
\end{cor}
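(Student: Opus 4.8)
The plan is to realize the desired field directly as a multiquadratic field $K = \Q(\sqrt{l_1^\ast}, \dots, \sqrt{l_s^\ast})$ for suitably chosen distinct odd primes $l_1, \dots, l_s$, exactly as in the proof of Theorem \ref{thm:exponent2}. Since each $l_i^\ast = (-1)^{(l_i-1)/2}l_i \equiv 1 \pmod 4$, the field $\Q(\sqrt{l_i^\ast})$ is ramified only at $l_i$ (in particular $2$ is unramified), so $K$ is ramified at precisely the $s$ odd primes $l_1, \dots, l_s$, each tamely with $e = 2$ and inertia group $T_i = \langle x_i \rangle$. As computed in that proof, $l_i$ splits completely in $\Q(\sqrt{l_j^\ast})$ if and only if $\legendre{l_i}{l_j} = +1$, so the inertial degree of $l_i$ in $K$ is $f = 1$ exactly when $\legendre{l_i}{l_j} = +1$ for every $j \ne i$, and $f = 2$ otherwise. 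Thus the whole problem reduces to producing distinct odd primes $l_1, \dots, l_s$ whose quadratic-residue matrix $\left( \legendre{l_i}{l_j} \right)$ has a non-constant $i$-th row (i.e.\ containing an entry $-1$) for exactly $i = 1, \dots, r$ and an all-$+1$ off-diagonal $i$-th row for $i = r+1, \dots, s$.

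To build such primes I would prescribe the residues $l_i \bmod 4$ and the pairwise Legendre symbols, and then realize them inductively by Dirichlet's theorem: having chosen $l_1, \dots, l_t$, the requirement that $\legendre{l_{t+1}}{l_i}$ take a prescribed value for each $i \le t$ together with a prescribed value of $l_{t+1} \bmod 4$ is a set of congruence conditions on $l_{t+1}$ to the pairwise coprime moduli $4, l_1, \dots, l_t$, so by the Chinese Remainder Theorem and Dirichlet there are infinitely many primes $l_{t+1}$ meeting them; one chooses one distinct from the earlier primes. The single constraint to respect at each step is quadratic reciprocity, which couples $\legendre{l_i}{l_j}$ and $\legendre{l_j}{l_i}$ through $\legendre{l_i}{l_j}\legendre{l_j}{l_i} = (-1)^{\frac{l_i-1}{2}\frac{l_j-1}{2}}$.

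For $r = 0$ there is nothing to do beyond Theorem \ref{thm:split} (all primes totally split). For $r \ge 2$ I would take every $l_i \equiv 1 \pmod 4$, forcing the matrix to be symmetric, and impose the ``chain'' pattern $\legendre{l_i}{l_{i+1}} = \legendre{l_{i+1}}{l_i} = -1$ for $1 \le i \le r-1$ with all other off-diagonal symbols equal to $+1$; since the path $1 - 2 - \cdots - r$ touches every vertex, rows $1, \dots, r$ each acquire a $-1$ while rows $r+1, \dots, s$ remain all $+1$, as required. The genuinely special case is $r = 1$ (which forces $s \ge 2$, since a prime with $e = 2$ cannot have $f = 2$ in a field of degree $2^1$): a single off-diagonal $-1$ in a symmetric matrix would make \emph{two} rows non-constant, so here I would instead take $l_1, l_2 \equiv 3 \pmod 4$ with $\legendre{l_2}{l_1} = +1$, whereupon reciprocity forces $\legendre{l_1}{l_2} = -1$; this asymmetry places the lone $-1$ in row $1$ only, giving $f = 2$ for $l_1$ and $f = 1$ for $l_2$, while the remaining primes $l_3, \dots, l_s$ are taken $\equiv 1 \pmod 4$ and made to split against everything.

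The main obstacle is precisely this reciprocity coupling: one cannot set the two symbols $\legendre{l_i}{l_j}$ and $\legendre{l_j}{l_i}$ independently, and it is this that makes the isolated case $r = 1$ require an asymmetric gadget built from two primes $\equiv 3 \pmod 4$ rather than the symmetric chain used for $r \ge 2$. Once the residues modulo $4$ have been chosen to accommodate reciprocity, the existence of the primes realizing the prescribed residue pattern is a routine application of Dirichlet's theorem, and the resulting field $K = \Q(\sqrt{l_1^\ast}, \dots, \sqrt{l_s^\ast})$ has the stated decomposition behavior.
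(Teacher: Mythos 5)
Your proof is correct, but it takes a genuinely different route from the paper's. The paper constructs an abstract $s\times s$ \emph{symmetric} sign matrix with $-1$'s confined to the first row and column, appeals to the characterization of QR matrices from \cite{DDK} (via the reduced form \eqref{eq:reducedform}) to conclude that this matrix is a QR matrix, and then invokes Theorem~\ref{thm:exponent2} to realize the corresponding configuration; you instead bypass the QR-matrix machinery entirely and build the primes $l_1,\dots,l_s$ directly by CRT and Dirichlet, tracking the quadratic reciprocity coupling $\legendre{l_i}{l_j}\legendre{l_j}{l_i}=(-1)^{\frac{l_i-1}{2}\frac{l_j-1}{2}}$ by hand. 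Your approach buys self-containedness, and it also forces you to confront the one genuinely delicate case, $r=1$: a symmetric sign matrix can never have exactly one non-constant row (a single off-diagonal $-1$ at $(1,j)$ is mirrored at $(j,1)$), so the paper's symmetric construction does not produce this case as written --- indeed, a first row with $m$ off-diagonal entries equal to $-1$ in a symmetric matrix yields $m+1$ rows containing a $-1$ once $m\geq 1$, so the attainable counts are $0,2,3,\dots,s$. Your gadget of two primes $l_1,l_2\equiv 3\pmod 4$ with $\legendre{l_2}{l_1}=+1$, hence $\legendre{l_1}{l_2}=-1$, supplies exactly the missing skew-symmetric $2\times 2$ block (the block $A$ with $k=2$ in \eqref{eq:reducedform}) needed for $r=1$, and your parenthetical observation that $r=1$ forces $s\geq 2$ (since $e=2$, $f=2$ requires $2^s\geq 4$) correctly delimits the statement. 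The chain construction for $r\geq 2$ and the inductive prime selection are both sound; the only thing the paper's route buys in exchange is brevity, since the DDK characterization has already been set up for Theorem~\ref{thm:exponent2}.
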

 
\begin{proof}
As previously mentioned, it is shown in \cite{DDK} that every QR-matrix is permutation equivalent to an $s\times s$ matrix of the form 
in equation \eqref{eq:reducedform}.  Let $S = \big( m_{ij}\big)$ be the $s\times s$ 
symmetric sign matrix whose first row has $r$ entries equal to $-1$, and $s-r$ entries which are $+1$ and all other entries 
above the diagonal are $+1$.  
Then $S$ is a QR-matrix, so by Theorem~\ref{thm:exponent2} the corresponding tame decomposition configuration
is realizable by a multiquadratic extension $K/\Q.$  
In this realization, the inertial degree for the $i^\textup{th}$ prime, $f_i$, is 2 (i.e., $\vert Z_i / T_i \vert = 2$) if and 
only if $z_i \notin T_i$, i.e., if and only if the $i^\textup{th}$ row of $S$ contains an entry equal to $-1$ 
(by equation \eqref{eq:ziingeneral} since $m_{ij}=(-1)^{a_{ij}}$).
It follows from the form of the matrix $S$ that precisely $r$ of the ramified primes have $f_i = 2$ and the remaining ramified
primes have $f_i = 1$.  
\end{proof}
 
\begin{remark}
This Corollary shows in particular that the question of realizing tame decomposition configurations is more precise than the
simpler question of specifying the ramification index $e$ and the inertial degree $f$ for each of the (tamely) ramifying primes. 
\end{remark}

Theorem \ref{thm:exponent2} shows that, for an elementary abelian 2-group of rank at least 3, not every tame decomposition configuration 
can be realized over $\QQ$.  It follows by Proposition \ref{prop:quotient} that 
if $G$ is {\it any\/} finite abelian group with 2-rank at least 3 then not every 
tame decomposition configuration can be realized over $\QQ$, since there are quotients of such configurations that cannot be
realized.

Suppose now that $G=\Z/4\Z \times \Z/2\Z$, so that $s=2.$  For a tame decomposition configuration  $(G, \mathcal T, \mathcal Z)$, 
we must have (after a suitable ordering),  $T_1=\langle x_1 \rangle $ with $x_1 \in G$ of order $4$, and $T_2=\langle x_2 \rangle $ 
with $x_2 \notin T_1.$ 
Suppose $K/\Q$ is a realization over $\QQ$ of  $(G, \mathcal T, \mathcal Z)$ and let $p_i$ be primes of $\Z$ whose inertia 
groups are the images 
of $T_i$ in $\Gal(K/\Q).$  Since $x_1$ has order 4, it
follows that $p_1 \equiv 1 \pmod 4$ (the ramification is tame, so $T_1$ is isomorphic to a subgroup of the multiplicative group
$(\ZZ/ p_1 \ZZ)^*$ of the residue field).  
If $F$ is the (unique) biquadratic subfield of 
$K$ (the fixed field of $x_1^2$), 
then $F$ is ramified only at the two primes $p_1$ and $p_2$, so $ F=\Q(\sqrt {p_1^*},\sqrt{p_2^*})$ with
$p_1^*=p_1$ and $p_2^* = (-1)^{(p_2 - 1)/2} p_2$ as usual. 
But then $p_1$ splits in $\Q(\sqrt {p_2^*})$ if and only if $p_2$ splits in $\Q(\sqrt {p_1^*})$ since 
$\big(\frac {p_2^*} {p_1}\big)=\big(\frac {p_1} {p_2}\big)=\big(\frac {p_1^*} {p_2}\big)$, so the decomposition of $p_1$ and
$p_2$ cannot be chosen independently.  
It follows that there exist some tame 
decomposition configurations $(G, \mathcal T, \mathcal Z)$ for $G=\Z/4\Z \times \Z/2\Z$
that are not realizable over $\Q$.  For an explicit example, 
the configuration $Z_1=T_1$ and $Z_2=G$ is not realizable over $\QQ$ (it requires that $p_1$ split in $\Q(\sqrt {p_2^*})$
but $p_2$ to be inert in $\Q(\sqrt {p_1^*})$).  This also shows that not all ramification and inertial indices
are possible---it is not possible to have $e = 4$, $f = 1$ (i.e., ramified of degree
4 and otherwise totally split) for one prime and $e = 2$, $f = 4$ (i.e., ramified of degree 2 and otherwise completely inert) 
for the other.  It is easy to check that this reciprocity condition is the statement that
$Z_1 = T_1$ if and only if $Z_2 \le \langle T_2, x_1^2 \rangle$.

If we write $G = \ZZ / 4 \ZZ \times \ZZ / 2 \ZZ = \langle x_1 \rangle \times \langle y \rangle$ then
up to an isomorphism of $G$ or an interchange of $\{ T_1, Z_1 \}$ and $\{ T_2, Z_2 \}$, there are nine
possible tame decomposition configurations for $G$.   
Searching explicit examples in the number field data base \cite{JR} shows that all configurations 
satisfying the reciprocity condition 
can be realized by a tame decomposition extension over $\Q$, so in fact this is the only obstruction 
to finding a realization in this case. 
In Table \ref{table:Z4Z2} we list the nine possible tame decomposition configurations 
$\{ T_1, Z_1 \}$ and $\{ T_2, Z_2 \}$  for  $\ZZ / 4 \ZZ \times \ZZ / 2 \ZZ$.
For each realizable configuration we give a polynomial and primes $p_1$ and $p_2$
(with ramification index $e$ and inertial degree $f$) realizing the configuration.

\begin {table}[ht]    %
\begin{center}
  \begin{tabular}{c  c  c  c | c  c | p{5.3cm} }
$T_1$ & $Z_1$ & $T_2$ & $Z_2$  & $p_1 \ (e,f)$ & $p_2 \ (e,f)$ &  \text{realization (when possible)} \\ \hline
\multicolumn{1}{|r}{$\gp{x_1}$} & $\gp{x_1}$    &   $\gp{y}$  &   $\gp{y}$  & 13 (4,1) & 3 (2,1)
             & \multicolumn{1}{p{5.3cm}|}{$x^8 - x^7 - x^6 - 10x^5 + 5x^4 + 14x^3 + 10x^2 + 12x + 9$} \\ \hline
\multicolumn{1}{|r}{$\gp{x_1}$} & $\gp{x_1}$    &   $\gp{y}$  &   $\gp{y,x_1^2}$   & 37 (4,1)   & 3 (2,2)
             & \multicolumn{1}{p{5.3cm}|}{$x^8 - x^7 - 4 x^6 + 9 x^5 - 31 x^4 + 63 x^3 - 196 x^2 - 343 x + 2401$} 
             \\ \hline
\multicolumn{1}{|r}{$\gp{x_1}$} & $\gp{x_1}$    &   $\gp{y}$  &   $G$   &   &  
             & \multicolumn{1}{p{5.3cm}|}{not realizable} 
             \\ \hline
\multicolumn{1}{|r}{$\gp{x_1}$} & $G$    &   $\gp{y}$  &   $\gp{y}$   &    &   
             & \multicolumn{1}{p{5.3cm}|}{not realizable} 
             \\ \hline
\multicolumn{1}{|r}{$\gp{x_1}$} & $G$    &   $\gp{y}$  &   $\gp{y,x_1^2}$   &    &   
             & \multicolumn{1}{p{5.3cm}|}{not realizable} 
             \\ \hline
\multicolumn{1}{|r}{$\gp{x_1}$} & $G $    &   $\gp{y}$  &   $G$   & 5 (4,2) & 3 (2,4)
             & \multicolumn{1}{p{5.3cm}|}{$x^8 - x^7 + x^5 - x^4 + x^3 - x + 1$}
             \\ \hline
\multicolumn{1}{|r}{$\gp{x_1}$} & $\gp{x_1}$    &   $\gp{x_1y}$  &   $\gp{x_1y}$  & 5 (4,1) & 29 (4,1)
             & \multicolumn{1}{p{5.3cm}|}{$x^8 - x^7 - 47 x^6 + 40 x^5 + 581 x^4 - 655 x^3 - 1603 x^2 + 1968 x + 36$}
             \\ \hline
\multicolumn{1}{|r}{$\gp{x_1}$} & $\gp{x_1}$    &   $\gp{x_1y}$  &   $G$   &   &    
             & \multicolumn{1}{p{5.3cm}|}{not realizable} 
             \\ \hline
\multicolumn{1}{|r}{$\gp{x_1}$} & $G$    &   $\gp{x_1y}$  &   $G$    & 5 (4,2) & 13 (4,2) 
             & \multicolumn{1}{p{5.3cm}|}{$x^8 - x^7 - 21 x^6 + 18 x^5 + 89 x^4 - 19 x^3 - 89 x^2 - 38 x - 4$}
             \\  \hline            
  \end{tabular}
\\[5pt] 
 \caption {Decomposition Configurations for $\Z/4\Z \times \Z/2\Z$} \label{table:Z4Z2} 
 \end{center}
\end {table}

As before, since there are configurations that cannot be realized over $\QQ$,
it follows that {\it any\/} finite abelian group $G$ whose 2-primary part has rank 2 but not exponent 2 will have
tame decomposition configurations that cannot be realized over $\QQ$, since it has quotients that cannot be realized.  

Finally, if 
$G=\Z/2^n\Z$, then $s=1$ and the only tame decomposition configuration  $(G, \mathcal T, \mathcal Z)$ is $T=Z=G$ 
which is realizable over $\Q$ by the subfield $K_{2^n}(l)$ of degree $2^n$ of the cyclotomic field of $l^\textup{th}$ roots of
unity for any prime $l \equiv 1 \pmod {2^n}$.

\subsection{A Reciprocity Theorem} \label{sec:reciprocitylaw}

Before considering finite abelian groups further, we first prove
a reciprocity theorem which may be of independent interest.

Fix an integer $n \ge 1$, a prime $p \equiv 1$ mod $n$, and distinct primes $l_1, \dots , l_s$
different from $p$ and prime to $n$.

\begin{lem} \label{lem:equalperps}
Suppose $\vec a = (a_1, \dots , a_s)$ and $\vec b = (b_1, \dots , b_s) \in \ZZ^s$  have the property that
for any $(A_1, \dots, A_s) \in \ZZ^s$, 
$$
(a_1, \dots , a_s) \cdot (A_1, \dots , A_s) \in n\ZZ \mathrm {\ if \ and \ only \ if \ } (b_1, \dots , b_s) \cdot (A_1, \dots , A_s) \in n\ZZ .
$$
Then there is an integer $u$ relatively prime to $n$ so that $ u (a_1, \dots , a_s) \equiv (b_1, \dots , b_s) \mod n$, i.e.,
$u a_i \equiv b_i \mod n$ for all $i =1,\dots,s$.

\end{lem}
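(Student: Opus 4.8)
The plan is to reinterpret the hypothesis as a statement about linear functionals on the finite module $(\ZZ/n\ZZ)^s$. Reducing $\vec a$ and $\vec b$ modulo $n$, each defines a homomorphism $\phi_{\vec a},\phi_{\vec b}\colon (\ZZ/n\ZZ)^s \to \ZZ/n\ZZ$ by $\vec A \mapsto \vec a \cdot \vec A$, and the stated condition says exactly that $\ker \phi_{\vec a} = \ker \phi_{\vec b}$. The conclusion we want is that $\vec b$ is a unit multiple of $\vec a$ in $(\ZZ/n\ZZ)^s$. (Morally this is the assertion that, for the perfect pairing on $(\ZZ/n\ZZ)^s$, two vectors with equal ``perps'' generate the same cyclic submodule; I will prove it directly rather than invoke duality.)

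First I would reduce to the case of a prime power $n = p^e$. If $\vec A \in \ZZ^s$ satisfies $\vec a \cdot \vec A \equiv 0 \pmod{p^e}$ for $p^e \,\|\, n$, then by the Chinese Remainder Theorem I can replace $\vec A$ by a vector $\vec A'$ congruent to it modulo $p^e$ and to $\vec 0$ modulo $n/p^e$ coordinatewise; then $\vec a \cdot \vec A' \equiv 0 \pmod{p^e}$ and $\vec a \cdot \vec A' \equiv 0 \pmod{n/p^e}$, so $\vec a \cdot \vec A' \equiv 0 \pmod n$, and the global hypothesis forces $\vec b \cdot \vec A' \equiv 0 \pmod n$, whence $\vec b \cdot \vec A \equiv 0 \pmod{p^e}$. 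Thus the equal-kernel hypothesis holds modulo each prime power dividing $n$. If for every such $p^e$ I can produce a unit $u_p$ modulo $p^e$ with $u_p \vec a \equiv \vec b \pmod{p^e}$, then recombining the $u_p$ by CRT yields a single $u$ coprime to $n$ with $u\vec a \equiv \vec b \pmod n$, as required.

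For the prime-power case the first step is to match valuations coordinatewise. Writing $g_i = \min(\ord_p(a_i), e)$, the test vector $p^{\,e-g_i} e_i$ lies in $\ker\phi_{\vec a}$, hence in $\ker \phi_{\vec b}$, which forces $\ord_p(b_i) \ge g_i$; running the same argument with the roles of $\vec a$ and $\vec b$ exchanged (the hypothesis is symmetric) gives $\min(\ord_p(a_i),e) = \min(\ord_p(b_i),e)$ for every $i$. If all $a_i \equiv 0 \pmod{p^e}$ then $\vec b \equiv \vec 0$ as well and $u_p = 1$ works; otherwise, reorder so that $a_1$ has minimal valuation $f$, and write $a_i = p^{g_i} d_i$ and $b_i = p^{g_i} d_i'$ with $d_i, d_i'$ units whenever $g_i < e$ (so in particular $g_1 = f$). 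I then claim $u_p = d_1' d_1^{-1}$ works. To see this for a fixed $i$, choose an integer $\alpha_i$ with $\alpha_i \equiv -d_1^{-1} p^{\,g_i - f} d_i \pmod{p^{\,e-f}}$, so that $\alpha_i e_1 + e_i \in \ker\phi_{\vec a}$; membership in $\ker\phi_{\vec b}$ then yields, after dividing by $p^{g_i}$, the congruence $d_i' \equiv d_1' d_1^{-1} d_i \pmod{p^{\,e-g_i}}$, which is precisely $u_p a_i \equiv b_i \pmod{p^e}$ (and the case $i=1$, as well as the coordinates with $g_i = e$, are immediate).

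The main obstacle is the uniformity in this last step: the coordinates $a_i$ need not be units, so there is no single coordinate one can simply ``divide by,'' and a unit read off from one coordinate need not be compatible with the others. The valuation-matching observation together with the carefully chosen two-coordinate test vectors $\alpha_i e_1 + e_i$ is exactly what synchronizes the local ratios $d_i'/d_i$ into the one global unit $d_1' d_1^{-1}$; verifying that this $\alpha_i$ genuinely places the test vector in $\ker\phi_{\vec a}$, and that the resulting congruence holds to the sharp modulus $p^{\,e-g_i}$, is the single computation that must be carried out with care.
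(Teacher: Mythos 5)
Your proof is correct, but it takes a genuinely different route from the paper's. The paper argues structurally: the hypothesis says $\ker f_{\vec a} = \ker f_{\vec b}$ for the homomorphisms $f_{\vec a}, f_{\vec b} : \ZZ^s \to \ZZ/n\ZZ$, so the two induced isomorphisms from the common quotient $\ZZ^s/\ker f_{\vec a}$ onto the common image $d\ZZ/n\ZZ$ (where $d = \gcd(a_1,\dots,a_s,n)$) differ by an automorphism of that cyclic group, which is multiplication by a unit; evaluating at the standard basis vectors finishes the argument in a few lines. You instead reduce by CRT to the prime-power case and there carry out an explicit local computation: matching the $p$-adic valuations coordinatewise via the test vectors $p^{e-g_i}e_i$, and then synchronizing the unit parts via the two-coordinate test vectors $\alpha_i e_1 + e_i$ anchored at a coordinate of minimal valuation. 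I checked the key computation: with $\alpha_i \equiv -d_1^{-1}p^{g_i-f}d_i \pmod{p^{e-f}}$ one indeed gets $\alpha_i a_1 + a_i \equiv 0 \pmod{p^e}$, and the resulting congruence $d_i' \equiv d_1'd_1^{-1}d_i \pmod{p^{e-g_i}}$ rescales to $b_i \equiv u_p a_i \pmod{p^e}$, so the argument goes through. The trade-off: the paper's proof is shorter and conceptual (essentially duality for finite cyclic modules), though it silently relies on lifting a unit mod $n/d$ to a unit mod $n$; yours is longer and requires the valuation bookkeeping, but it is entirely self-contained, exhibits the unit explicitly as $d_1'd_1^{-1}$ at each prime, and makes visible exactly where the hypothesis is used (one test vector per coordinate).
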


\begin{proof}
For any $\vec a$, the map $f_{\vec a} : \ZZ^s \rightarrow Z/nZ$ defined by the usual 
dot product mod $n$, $f_{\vec a} (\vec z) = \vec a \cdot \vec z \mod n$, is a homomorphism of abelian groups.
The assumption of the Lemma is that $ \ker f_{\vec a}= \ker f_{\vec b}$, so that
$ \ZZ^s / \ker f_{\vec a}    =    \ZZ^s / \ker f_{\vec b}$.  Since
$ \ZZ^s / \ker f_{\vec a}  \simeq d \ZZ / n \ZZ$ for some divisor $d$ of $n$,
also $ \ZZ^s / \ker f_{\vec b} \simeq d \ZZ / n \ZZ$.  Any automorphism of 
$d \ZZ / n \ZZ$ is given by multiplication
by some element $u$ relatively prime to $n$, so $ u f_{\vec a}   (\vec z)    =   f_{\vec b}   (\vec z) $
for all $\vec z$. Evaluating this for $\vec z$ the usual vectors $\epsilon_i = (0,...,0,1,0,...,0)$ gives
$u (a_1, \dots , a_s) \equiv (b_1, \dots , b_s) \mod n$.
\end{proof}

Let $K_n(p)$ be the degree $n$ subfield of the cyclotomic field of 
$p^\textup{th}$ roots of unity as in Definition \ref{def:Kp} and the following discussion.

Let $F$ be the cyclotomic field of $n^\textup{th}$ roots of unity and let 
$$
L = F (l_1^{1/n}, \dots , l_s^{1/n}) .
$$

The field $L$ is Galois over $\QQ$ with Galois group isomorphic to the
semidirect product of $\Gal (L/F)$ by $\Gal(F/\QQ)$. 
The abelian normal subgroup 
$\Gal (L/F)$ is canonically isomorphic to $\mu_n^s$ where $\mu_n$ is the group of $n^\textup{th}$ roots of
unity under the map 
$\sigma \mapsto (\dots, \eta_i , \dots)$, 
where
$$
\sigma (l_i^{1/n}) = \eta_i (l_i^{1/n}), \quad 1 \le i \le s.
$$
Choosing a fixed primitive $n^\textup{th}$ root of unity $\zeta_n$ defines a (noncanonical) isomorphism of $\mu_n$ with
$\ZZ / n \ZZ$, inducing a (noncanonical) isomorphism of $\mu_n^s$ with $(\ZZ / n \ZZ)^s$.  The corresponding (noncanonical)
isomorphism of $\Gal (L/F)$ with $(\ZZ / n \ZZ)^s$ maps the automorphism
$ \lambda_1^{x_1} \dots \lambda_s^{x_s} $ to $(x_1, \dots , x_s)$ where 
for $i = 1, \dots , s$, the automorphism $\lambda_i \in \Gal (L/F)$ is defined by
\begin{equation} \label{eq:lamdadefs}
\lambda_i = 
\begin{cases}
    {l_i^{1/n}} \mapsto \zeta_n \ {l_i^{1/n}} &  \\
       {l_j^{1/n}} \mapsto {l_j^{1/n}} & \text{for } j \ne i . 
  \end{cases}
\end{equation}

The automorphisms $\sigma_a \in \Gal(F/\QQ)$ for $(a,n) = 1$ lift to elements of $\Gal(L/\QQ)$ by defining
$\sigma_a (l_i^{1/n}) = l_i^{1/n}$ for $i = 1, \dots, s$.  Conjugation by $\sigma_a$ on the abelian normal 
subgroup $\Gal(L/F)$ in the semidirect product is by raising to the $a^\textup{th}$ power:
$ \sigma_a (\lambda_1^{x_1} \dots \lambda_s^{x_s} )  \sigma_a^{-1} = \lambda_1^{a x_1} \dots \lambda_s^{a x_s} $.  
In particular, every subgroup of $\Gal(L/F)$ is normal in $\Gal(L/\QQ)$.

The prime $p$ splits completely in $F$ since $p \equiv 1$ mod $n$, and if $\wp$ is any one of the $\ph(n)$ distinct primes
of $F$ dividing $(p)$ then the Frobenius automorphism $\Fr_{L/F} (\wp )$ in the abelian extension $L/F$ depends only on $\wp$.  
If $\sigma_a ( \wp) $ ($\sigma_a \in \Gal(F/\QQ)$) is any other prime of $F$ lying over $(p)$, then 
$\Fr_{L/F} (\sigma_a ( \wp )) = \Fr_{L/F} (\wp )^a$.

\begin{thm} \label{thm:reciprocitytheorem}
(Reciprocity Theorem) Let $\Fr_{L/F} (\wp) \in \Gal(L/F)$ be the Frobenius element for any prime of $F$ lying above $(p)$ and suppose
$ \Fr_{L/F} (\wp) = \lambda_1^{a_1} \dots \lambda_s^{a_s}$ where the $\lambda_i \in \Gal(L/F)$ are as in equation \eqref{eq:lamdadefs}.
Then there is a generator $\tau$ for $\Gal (K_n(p)/\QQ)$ with $ \Fr_{K_n(p)/\QQ} (l_i) = \tau^{a_i}$ for all $i = 1, \dots, s.$
\end{thm}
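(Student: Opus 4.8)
The plan is to deduce the statement from Lemma~\ref{lem:equalperps} by showing that the vector $\vec a = (a_1, \dots, a_s)$ recording the Frobenius in $L/F$ and the vector $\vec b = (b_1, \dots, b_s)$ recording the Frobenius in $K_n(p)/\QQ$ differ by a unit modulo $n$. To set up $\vec b$, fix a primitive root $g$ mod $p$ with corresponding generator $\tau_g$ of $\Gal(K_n(p)/\QQ)$ as in \eqref{eq:Kpisom}; by \eqref{eq: frobinKp} we have $\Fr_{K_n(p)/\QQ}(l_i) = \tau_g^{b_i}$, where $l_i \equiv g^{b_i} \bmod p$, and since $\tau_g$ has order $n$ this defines $b_i$ modulo $n$. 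If I can produce an integer $u$ coprime to $n$ with $u\vec a \equiv \vec b \bmod n$, then $\tau := \tau_g^{u}$ is again a generator of $\Gal(K_n(p)/\QQ)$ and $\Fr_{K_n(p)/\QQ}(l_i) = \tau_g^{b_i} = \tau_g^{u a_i} = \tau^{a_i}$ for every $i$, which is exactly the assertion.

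To extract such a $u$ from the Lemma, I would check that for each $\vec A = (A_1, \dots, A_s) \in \ZZ^s$ the conditions $\vec a \cdot \vec A \in n\ZZ$ and $\vec b \cdot \vec A \in n\ZZ$ are equivalent, by proving each is equivalent to the single arithmetic statement that $m := \prod_i l_i^{A_i}$ is an $n$-th power modulo $p$. The $\vec b$ side is immediate: $m \equiv g^{\,\vec b \cdot \vec A} \bmod p$, and because $g$ is a primitive root and $n \mid p-1$, the element $m$ is an $n$-th power mod $p$ exactly when $n \mid \vec b \cdot \vec A$. For the $\vec a$ side I would compute the action of Frobenius on Kummer generators. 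Since $p$ splits completely in $F$, any prime $\wp \mid p$ of $F$ has $N\wp = p$ and residue field $\FF_p$, so for a prime $\mathfrak P$ of $L$ above $\wp$ the automorphism $\Fr_{L/F}(\wp)$ reduces to the $p$-power map mod $\mathfrak P$. Applying $\Fr_{L/F}(\wp) = \lambda_1^{a_1}\cdots\lambda_s^{a_s}$ to $l_i^{1/n}$ and using \eqref{eq:lamdadefs} gives $\zeta_n^{a_i} l_i^{1/n} \equiv (l_i^{1/n})^p = l_i^{(p-1)/n}\,l_i^{1/n} \bmod \mathfrak P$, whence (cancelling the unit $l_i^{1/n}$) $\zeta_n^{a_i} \equiv l_i^{(p-1)/n} \bmod \wp$. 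Consequently $m^{(p-1)/n} \equiv \zeta_n^{\,\vec a \cdot \vec A} \bmod \wp$, and since the reduction of $\zeta_n$ has exact order $n$ in $\FF_p^\times$, the element $m$ is an $n$-th power mod $p$ precisely when $n \mid \vec a \cdot \vec A$. With both equivalences established, Lemma~\ref{lem:equalperps} supplies the required $u$.

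The one step demanding care — and essentially the only place a subtlety lives — is the identification of the exponents $a_i$ via the reduction $\zeta_n \bmod \wp$. The fixed primitive $n$-th root of unity $\zeta_n \in F$ reduces to \emph{some} primitive $n$-th root of unity in $\FF_p$, while the primitive root $g$ produces the (a priori different) primitive $n$-th root of unity $g^{(p-1)/n}$; the unit $u$ furnished by the Lemma is exactly the discrete logarithm relating these two choices. Routing the argument through Lemma~\ref{lem:equalperps} is precisely what lets me absorb all of these noncanonical choices (of $\zeta_n$, of $g$, and of the generator $\tau$) without bookkeeping, so that the conclusion requires only the \emph{existence} of a generator $\tau$ with $\Fr_{K_n(p)/\QQ}(l_i) = \tau^{a_i}$ rather than a canonically distinguished one.
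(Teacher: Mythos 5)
Your proposal is correct and follows essentially the same route as the paper: both reduce the theorem to Lemma~\ref{lem:equalperps} by showing that $\vec a \cdot \vec A \in n\ZZ$ and $\vec b \cdot \vec A \in n\ZZ$ are each equivalent to $\prod_i l_i^{A_i}$ being an $n$-th power mod $p$. The only (cosmetic) difference is that you verify the $\vec a$-side equivalence by an explicit Euler-criterion computation with the $p$-power map on Kummer generators, while the paper phrases it via the decomposition field of $\wp$ and complete splitting in $F\bigl((l_1^{1/n})^{A_1}\cdots(l_s^{1/n})^{A_s}\bigr)$.
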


\begin{proof}
If $\Fr_{L/F} (\wp) = \lambda_1^{a_1} \dots \lambda_s^{a_s} $, then
for any $A_1, \dots , A_s \in \ZZ$, 
\begin{equation} 
\Fr_{L/F} (\wp) ( (l_1^{1/n})^{A_1} \dots (l_s^{1/n})^{A_s} ) 
= \zeta_n^{a_1 A_1 + \dots + a_s A_s} (l_1^{1/n})^{A_1} \dots (l_s^{1/n})^{A_s}  .
\end{equation}
It follows that $(a_1, \dots, a_s)\cdot (A_1, \dots, A_s) \equiv  0 \mod n$ if and only if
the decomposition field for $\wp$ in the abelian extension $L/F$ contains 
the element $(l_1^{1/n})^{A_1} \dots (l_s^{1/n})^{A_s} $.

Now, the element 
$(l_1^{1/n})^{A_1} \dots (l_s^{1/n})^{A_s} $ lies in the decomposition field for $\wp$ 
if and only if $\wp$ splits completely in the field $F((l_1^{1/n})^{A_1} \dots (l_s^{1/n})^{A_s})$,
and since $\wp$ is a degree one prime of $F$, this is true 
if and only if the polynomial 
$x^n - {l_1}^{A_1} \dots {l_s}^{A_s}$ has a root mod $p$, i.e., if and only if 
${l_1}^{A_1} \dots {l_s}^{A_s}$ is an $n^{\text{th}}$ power mod $p$.
This is equivalent to the statement that 
$\sigma_{l_1}^{A_1} \dots \sigma_{l_s}^{A_s}$ projects to the identity in
$\Gal(K_n(p)/\QQ)$; since $\sigma_{l_i}$ projects to the
Frobenius automorphism $\Fr_{K_n(p)/\QQ} (l_i)$ in $\Gal(K_n(p)/\QQ)$, this is in turn
equivalent to the statement that 
$\Fr_{K_n(p)/\QQ} (l_1)^{A_1} \dots \Fr_{K_n(p)/\QQ} (l_s)^{A_s} = 1$ in $\Gal(K_n(p)/\QQ)$.
If $l_i = g^{b_i} \mod p$ for $g$ a primitive root modulo $p$, 
then by equation \eqref{eq: frobinKp}, $\Fr_{K_n(p)/\QQ} (l_i) = \tau_g^{b_i}$, so 
$\Fr_{K_n(p)/\QQ} (l_1)^{A_1} \dots \Fr_{K_n(p)/\QQ} (l_s)^{A_s} = 1$
if and only if
$\tau_g^{b_1 A_1 + \dots + b_s A_s} = 1$ in $\Gal(K_n(p)/\QQ)$, i.e., if and only if
$(b_1, \dots , b_s) \cdot (A_1, \dots , A_s) \in n \ZZ$.

Hence, $(a_1, \dots , a_s) \cdot (A_1, \dots , A_s) \in n \ZZ$ if and only if
$(b_1, \dots , b_s) \cdot (A_1, \dots , A_s) \in n \ZZ$.  By Lemma \ref{lem:equalperps}, there is
an integer $u$ relatively prime to $n$ with 
$u (a_1, \dots , a_s) \equiv (b_1, \dots , b_s) \mod n$.  
Since $\Fr_{K_n(p)/\QQ} (l_i) = \tau_g^{b_i} = \tau_g^{u a_i}$, the theorem follows with
$\tau = \tau_g^u$.  
\end{proof}

\begin{remark}
There are $\ph(n)$ distinct primes $\wp$ above $(p)$ in $F$, and there are $\ph(n)$ distinct generators
for $\Gal (K_n(p)/\QQ)$.  Distinct primes $\wp$ correspond to distinct generators $\tau$ in 
Theorem \ref{thm:reciprocitytheorem}: if $\wp$ corresponds to $\tau$, then
$\sigma_a (\wp)$ for $\sigma_a \in \Gal(F/\QQ)$ corresponds to $\tau^a$.  Note in particular that the subgroup
$\langle \Fr_{L/F} (\wp) \rangle \le \Gal(L/F)$ depends only on $p$ and not on the choice of $\wp$ dividing $(p)$ in $F$.  
\end{remark}

\begin{remark}
When $n = 2$, Theorem \ref{thm:reciprocitytheorem} is just quadratic reciprocity for $p$ and the primes $l_i$, $i = 1, \dots, s$.
For larger values of $n$, the theorem carries more information than simply the appropriate $n^\textup{th}$ power
reciprocity for the individual primes.  For example, if $n= 3$ and $s = 2$ the theorem not only considers whether
$l_1$ and $l_2$ are cubes modulo $p$, but also when they are not cubes whether they lie in the {\it same\/} (or the
{\it inverse\/}) cubic residue class modulo $p$. 
\end{remark}

\subsection{Finite Abelian Groups of Odd Order}   \label{abelianodd}

In this subsection we show that, unlike the case of abelian 2-groups where quadratic reciprocity intervened,
for finite abelian groups of odd order there are no constraints to realizing tame decomposition configurations over $\QQ$:

\begin{thm} \label{thm:oddabelian}
Every tame decomposition configuration $(G,\mathcal T, \mathcal Z)$ with $G$ an abelian group of odd order is realizable over $\Q.$
\end{thm}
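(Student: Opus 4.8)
The plan is to reduce to the base case $G = (\Z/n\Z)^s$ and then use the Reciprocity Theorem (Theorem~\ref{thm:reciprocitytheorem}) to realize an arbitrary decomposition matrix when $n$ is odd. By Propositions~\ref{prop:quotient} and~\ref{lift}, it suffices to realize every tame decomposition configuration for $G = (\Z/n\Z)^s$ with $n$ odd; indeed, any abelian group $H$ of odd order is a quotient of $(\Z/n\Z)^s$ for $n$ equal to the exponent of $H$ (itself odd), and a realization of the configuration upstairs descends to one for $(H,\mathcal S,\mathcal W)$. So I would first invoke this reduction explicitly, recalling that for $G=(\Z/n\Z)^s$ we have $G = T_1 \times \dots \times T_s$ with each $T_i \simeq \Z/n\Z$, and the decomposition data is encoded by the matrix $M_{\mathcal Z} = (a_{ij})$ of equation~\eqref{eq:decompmatrix}, where $z_i = \prod_j x_j^{a_{ij}}$ generates $Z_i$ together with $x_i$.

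**Next I would set up the construction.** The target is to produce distinct primes $l_1,\dots,l_s \equiv 1 \pmod n$ such that the realization $K = K_n(l_1)\cdots K_n(l_s)$ has, for each $i$, the prescribed decomposition group $Z_i = \langle x_i, z_i\rangle$. Identifying $T_i = \Gal(K_n(l_i)/\Q) = \langle \tau_i \rangle$ via a primitive root $g_i$ modulo $l_i$ as in~\eqref{eq:Kpisom}, the Frobenius $\Fr_{K/\Q}(l_i)$ acting on the other factors $K_n(l_j)$, $j\neq i$, must match the exponent vector $(a_{i1},\dots,a_{is})$. Concretely, I want $\Fr_{K_n(l_j)/\Q}(l_i) = \tau_j^{a_{ij}}$ for all $j \neq i$. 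This is exactly the type of splitting condition controlled by the Reciprocity Theorem: fixing $p = l_i$ and letting the other primes play the role of the $l_j$'s, Theorem~\ref{thm:reciprocitytheorem} relates the Frobenius of $l_i$ in each $K_n(l_j)$ to the Frobenius of $l_j$ in the auxiliary Kummer extension $L = F(l_1^{1/n},\dots)$ over $F = \Q(\zeta_n)$.

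**I would then build the primes inductively**, choosing $l_1, l_2, \dots$ one at a time so that each newly chosen prime splits in the Kummer extension in a way dictated by the already-fixed rows of $M_{\mathcal Z}$, while leaving the columns to be pinned down by the Reciprocity Theorem as later primes are added. The key point is that because $n$ is odd, the reciprocity relating ``$l_i$ in $K_n(l_j)$'' to ``$l_j$ in $K_n(l_i)$'' is not a rigid $\pm 1$ symmetry (as quadratic reciprocity forces for $n=2$), but rather a relation of the form $a_{ij} \leftrightarrow a_{ji}$ mediated by a unit $u \in (\Z/n\Z)^\times$ that we are free to adjust through the $\ph(n)$ distinct primes $\wp \mid p$ and the corresponding $\ph(n)$ choices of generator $\tau$. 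Since we only care about the \emph{subgroup} $Z_i = \langle x_i, z_i\rangle$ and not about $z_i$ up to scaling, this freedom should let us realize the $i$th row and the $j$th entry independently, prime by prime, via Chebotarev's density theorem applied to the splitting conditions in $L$.

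**The main obstacle** I anticipate is the bookkeeping needed to verify that the reciprocity constraints are genuinely \emph{non-binding} when $n$ is odd — that is, showing the unit $u$ and the choice of primitive roots can always be reconciled so that an arbitrary matrix $M_{\mathcal Z}$ is achievable. For $n=2$ the obstruction was real (the QR-matrix condition of Theorem~\ref{thm:exponent2}), and the heart of the odd case is precisely that the extra automorphisms of $\Z/n\Z$ — which exist because $|(\Z/n\Z)^\times| > 1$ is available to absorb the reciprocity unit when $n$ is odd, whereas $(\Z/2\Z)^\times$ is trivial — dissolve every such constraint. I would carry out the induction carefully, at each step using Chebotarev to choose $l_{t+1}$ splitting completely in a suitable subfield of $L$ that encodes both the desired entries $a_{t+1,j}$ (for $j \le t$) and, via the Reciprocity Theorem and the freedom in choosing $\wp$ and $\tau$, the compatibility of the already-imposed entries $a_{j,t+1}$. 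The final step is to confirm that the resulting composite $K$ realizes $(G,\mathcal T,\mathcal Z)$ and then to descend to $(H,\mathcal S,\mathcal W)$ via Proposition~\ref{prop:quotient}.
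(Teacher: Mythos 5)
Your overall architecture matches the paper's proof: reduce to $G=(\Z/n\Z)^s$ with $n$ odd via Propositions \ref{prop:quotient} and \ref{lift}, encode the configuration in the matrix $M_{\mathcal Z}$, and induct on $s$, using Chebotarev together with the Reciprocity Theorem to adjoin one prime at a time (the paper adds a new prime $p$ realizing a prescribed new row and new column of the matrix). However, there is a genuine gap in how you justify the key step, and you have misread what Theorem \ref{thm:reciprocitytheorem} is doing. The theorem does not relate ``$l_i$ in $K_n(l_j)$'' to ``$l_j$ in $K_n(l_i)$'' via a unit $u$; it relates the Frobenius elements $\Fr_{K_n(p)/\QQ}(l_i)$ (the new \emph{column} of $M_{\mathcal Z}$) to the single Frobenius element $\Fr_{L/F}(\wp)$ of the new prime in the Kummer extension $L=\QQ(\zeta_n)(l_1^{1/n},\dots,l_s^{1/n})$. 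The unit $u$ of Lemma \ref{lem:equalperps} only reflects the choice of generator $\tau$ of $\Gal(K_n(p)/\QQ)$, i.e., a rescaling of one column of $M_{\mathcal Z}$; it is harmlessly absorbed for \emph{every} $n$, including $n=2$, so it cannot be what ``dissolves'' the constraint in the odd case.

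What actually has to be checked --- and what your sketch never addresses --- is that the two Chebotarev conditions on the new prime $p$ can be imposed \emph{simultaneously}: a prescribed Frobenius in $K=K_n(l_1)\cdots K_n(l_s)$ (giving the new row, conditions \eqref{eq:frobcondition1}) and a prescribed Frobenius in $L$ (giving, via the Reciprocity Theorem, the new column, conditions \eqref{eq:frobcondition2}). Note that $K$ is not a subfield of $L$, so ``splitting conditions in a suitable subfield of $L$'' cannot encode the row data; one must work in the compositum $LK$. The point at which oddness of $n$ enters is precisely here: for $n$ odd, $L$ and $K$ are linearly disjoint over $\QQ$, so $\Gal(LK/\QQ)\cong\Gal(L/\QQ)\times\Gal(K/\QQ)$ and Chebotarev produces a prime with any prescribed pair of Frobenius data. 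For $n$ even this fails (e.g.\ $\QQ(\sqrt{l_1},\sqrt{l_2})$ and $\QQ(\sqrt{l_1^*},\sqrt{l_2^*})$ always intersect nontrivially), and that entanglement is exactly the quadratic-reciprocity obstruction of Theorem \ref{thm:exponent2}. Without the linear-disjointness observation your induction does not close.
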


The method of proof will be similar to the proof of Theorem \ref{thm:split}: by Propositions \ref{prop:quotient} and \ref{lift} 
(noting in the proof of the latter than 
$n$ can be taken odd if $H$ has odd exponent) it suffices to prove that all 
tame decomposition configurations are realizable over $\Q$ for the group
$G=(\Z/n\Z)^s$ when $n$ is odd, which we do using subfields of appropriate cyclotomic fields, whose
existence will be proved using the Reciprocity Theorem of the previous subsection.

For the remainder of this subsection let $n$ be an odd positive integer.  

A tame decomposition configuration for $G=(\Z/n\Z)^s$ is encoded in a matrix
$M_{\mathcal Z}$ as in equation \eqref{eq:decompmatrix}.  To prove Theorem \eqref{thm:oddabelian} we must show
that any such matrix containing arbitrary elements
of $\Z/n\Z$ in the off-diagonal positions arises from the decomposition information for the distinct primes
$l_1, \dots , l_s$ in the composite extension $K = K_n(l_1) \dots K_n(l_s)$ (where $K_n(l_i)$ is the field in
Definition \ref{def:Kp}), which we now make explicit. 

As in equation \eqref{eq:Kpisom}, let $g_i$ be a primitive root modulo $l_i$ and 
let $\tau_{g_i}$ be the corresponding generator for $\Gal(K_n(l_i)/\QQ)$, 
viewed as an element in $\Gal(K/\QQ)$ with $\tau_{g_i}$ acting
trivially on each $K_n(l_j)$ with $j \ne i$ (which amounts to choosing $g_i \equiv 1 \pmod {l_j}$ for $j \ne i$), so that
$\Gal(K/\QQ) = \langle \tau_{g_1} \rangle \times \dots \times \langle \tau_{g_s} \rangle$.  
The decomposition group for the prime $l_i$ in $\Gal(K/\QQ)$ is
generated by $\tau_{g_i}$ and the Frobenius automorphism for $l_i$ in 
$\Gal(K/K_n(l_i) ) = \langle \tau_{g_1} \rangle \times \dots \langle \tau_{g_{i-1}} \rangle \times \langle \tau_{g_{i+1}} \rangle 
\dots \times \langle \tau_{g_s} \rangle$.
This Frobenius automorphism is $\tau_{g_1}^{a_{i1}} \dots \tau_{g_s}^{a_{is}}$ where $\tau_{g_j}^{a_{ij}}$ is the restriction to 
$K_n(l_j)$ of the automorphism $\sigma_{l_i} \in \Gal( \QQ(\zeta_{l_j})/\QQ)$ for $j \ne i$ and $a_{ii} = 0$.  
By equation \eqref{eq: frobinKp} the $a_{ij}$ are given by $ l_i \equiv g_j^{a_{ij}} \pmod {l_j} $.  

It follows that the matrix encoding the decomposition information for $K/\QQ$ is the matrix
$\big( a_{ij}\big)$ whose diagonal entries $a_{ii}$ are zero and whose off-diagonal entries $a_{ij}$ for $i \ne j$ are determined by
$ l_i \equiv g_j^{a_{ij}} \pmod {l_j} $.  To prove Theorem \ref{thm:oddabelian} we must show that we may arrange for arbitrary elements
of $\Z/n\Z$ in the off-diagonal positions by choosing the primes $l_1, \dots , l_s$ appropriately.

We proceed by induction on $s$, the case $s = 1$ being trivial.  
Suppose the result is true for $s$ and let $S$ be an $(s+1)\times (s+1)$ matrix which we write as
\begin{equation} 
S= 
\begin{pmatrix}
0 & b _1 & b_2 & \dots & b_s   \\
a_1 & 0 &  a_{12}  &   \dots & a_{1s}  \\
a_2 & a_{21} & 0  &   \dots & a_{2s} \\
\vdots & \vdots &  \vdots & \ddots & \vdots \\
a_s & a_{s1} & a_{s2} &  \dots & 0 
\end{pmatrix}
\end{equation} 
and let $S'$ be the $s\times s$ minor given by 
\begin{equation} 
S'= 
\begin{pmatrix}
0 &  a_{12}  &   \dots & a_{1s}  \\
a_{21} & 0  &   \dots & a_{2s} \\
\vdots & \vdots &   \ddots & \vdots \\
a_{s1} & a_{s2} &  \dots & 0\\
\end{pmatrix} .
\end{equation} 
By induction, there are distinct primes $l_i \equiv 1 \pmod n$ and primitive roots $g_i$ mod $l_i$ for $i = 1, \dots, s$
so that the decomposition configuration for the composite field
$K = K_n(l_1) \dots K_n(l_s)$ gives the matrix $S'$, and we must find a prime $p \equiv 1 \pmod n$ 
distinct from $l_1, \dots, l_s$ so that
the decomposition configuration for the composite extension $K_n(p) K$ is the matrix $S$.  
The conditions on $p \equiv 1 \pmod n$ in order that $\{ p, l_1, \dots, l_s \}$ produces the matrix $S$ are then
\begin{align}
& \text{1. $\Fr_{K_n(l_i)/\QQ} (p) = \tau_{g_i}^{b_i}$ for $i = 1, \dots, s$, and }   \label{eq:frobcondition1}  \\
& \text{2. $\Fr_{K_n(p)/\QQ} (l_i) = \tau_{g}^{a_i}$ for $i = 1, \dots, s$ 
for some primitive root $g$ mod $p$.} \label{eq:frobcondition2}
\end{align}

As in subsection \ref{sec:reciprocitylaw}, let $F = \QQ(\zeta_n)$ and $L = F( l_1^{1/n}, \dots , l_s^{1/n})$.  
Since $n$ is odd, the Galois extensions $L$ and $K$ are 
linearly disjoint with Galois composite $L K$.
Extending the elements in $\Gal (L / \QQ)$ to $\Gal (L K/ \QQ)$ by having them act
trivially on $K$, and similarly for $\Gal (K / \QQ)$, we can identify $\Gal (L K/ \QQ)$ with
$\Gal (L / \QQ) \times \Gal ( K / \QQ)$.

By the remarks following equation \eqref{eq:lamdadefs}, the conjugacy class in $\Gal (L  K/ \QQ)$ of the element 
$( \lambda_1^{a_1} \dots \lambda_s^{a_s} , \tau_{g_1}^{b_1} \dots \tau_{g_s}^{b_s} )$ 
consists of the elements  $( \lambda_1^{a a_1} \dots \lambda_s^{a a_s} , \tau_{g_1}^{b_1} \dots \tau_{g_s}^{b_s} )$
where the $ \lambda_1^{a a_1} \dots \lambda_s^{a a_s} $ are the generators for the subgroup 
$\langle \lambda_1^{a_1} \dots \lambda_s^{a_s}  \rangle $ in $\Gal (L / \QQ)$.

By Chebotarev's density theorem, there exists a prime $p$ whose Frobenius automorphisms 
in $\Gal (L K / \QQ)$ 
give the conjugacy class of 
$( \lambda_1^{a_1} \dots \lambda_s^{a_s} , \tau_{g_1}^{b_1} \dots \tau_{g_s}^{b_s} )$.
Then $\Fr_{K_n(l_i)/\QQ} (p) = \tau_{g_i}^{b_i}$ for $i = 1, \dots, s$, so the conditions in \eqref{eq:frobcondition1} are satisfied for $p$.  
Also, there is a prime $\mathfrak p$ lying over 
$p$ in $L$ with $\Fr_{L/\QQ} (\mathfrak p) = \lambda_1^{a_1} \dots \lambda_s^{a_s}$.  Since $\lambda_1^{a_1} \dots \lambda_s^{a_s}$
is trivial on the subfield $\QQ(\zeta_n)$, $p$ splits completely in $\QQ(\zeta_n)$, i.e., $p \equiv 1 \pmod n$, and if
$\wp = \mathfrak p \cap \QQ(\zeta_n)$ then $\Fr_{L/F} (\wp) = \lambda_1^{a_1} \dots \lambda_s^{a_s}$ in the abelian extension $L/F$.

By the Reciprocity Theorem \ref{thm:reciprocitytheorem}, 
$\Fr_{L/F} (\wp) = \lambda_1^{a_1} \dots \lambda_s^{a_s}$ implies $ \Fr_{K_n(p)/\QQ} (l_i) = \tau^{a_i}$, $1 \le i \le s$,
for some generator $\tau$ of $\Gal (K_n(p)/\QQ)$.  If $g$ is a primitive root mod $p$ so that
$\tau = \tau_g$ as in \eqref{eq:Kpisom}, this shows the conditions in \eqref{eq:frobcondition2} are also satisfied for $p$, completing the
proof of Theorem \ref{thm:oddabelian} by induction.   \hfill{$\square$}

\begin{remark}
We note that $n$ odd was needed to ensure $L = \QQ(\zeta_n, l_1^{1/n}, \dots, l_s^{1/n})$ and $K = K_n(l_1) \dots K_n(l_s)$ are
linearly disjoint, which allowed us to find a prime $p$ that satisfied both \eqref{eq:frobcondition1} and
\eqref{eq:frobcondition2} simultaneously.  Since the fields $\QQ( \sqrt{l_1}, \sqrt{l_2})$ and
$\QQ( \sqrt{l_1^*}, \sqrt{l_2^*})$ always have a nontrivial intersection, the fields $L$ and $K$ are never
linearly disjoint when $n$ is even. This reflects the intervention of quadratic reciprocity constraints preventing 
some decomposition configurations from being realizable over $\QQ$ for abelian groups of even order as in the previous subsection.
\end{remark}

\subsection{Finite Abelian Groups with Cyclic or $\ZZ/2\ZZ \times \ZZ/2\ZZ$ Sylow-2 subgroup}   \label{abelianSylow2}
We have seen that if the abelian group $G$ has trivial Sylow-2 subgroup then all tame decomposition configurations
can be realized over $\QQ$, and that if the Sylow-2 subgroup has rank at least 3 or has rank 2 but not exponent 2 then there
are configurations that cannot be realized.  In this subsection we complete the analysis for finite abelian groups by
showing that in the two remaining cases, namely where the Sylow-2 subgroup of the abelian group is either
cylic or isomorphic to $\ZZ/2\ZZ \times \ZZ/2\ZZ$, then again all tame decomposition configurations
can be realized over $\QQ$:

\begin{thm} \label{thm:Sylow2abelian}
Every tame decomposition configuration $(G,\mathcal T, \mathcal Z)$ with $G$ an abelian group with
cyclic or $\ZZ/2\ZZ \times \ZZ/2\ZZ$ Sylow-2 subgroup is realizable over $\QQ.$
\end{thm}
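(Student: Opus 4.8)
The plan is to split $G$ into its coprime $2$-part and odd part and to glue compatible realizations of each. Write $G = G_2 \times G_{\mathrm{odd}}$, where $G_2$ is the Sylow-$2$ subgroup (cyclic or $\ZZ/2\ZZ \times \ZZ/2\ZZ$ by hypothesis) and $G_{\mathrm{odd}}$ is the odd part. For each $i$ write the chosen generator $x_i$ of $T_i$ as $x_i = (x_i^{(2)}, x_i^{(o)})$, set $T_i^{(2)} = \gp{x_i^{(2)}}$ and $T_i^{(o)} = \gp{x_i^{(o)}}$, and let $Z_i^{(2)}, Z_i^{(o)}$ be the images of $Z_i$ in the two factors. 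Because $\order{G_2}$ and $\order{G_{\mathrm{odd}}}$ are coprime, a cyclic group is the product of its $2$-part and odd part, and for any subgroup $Z \le G_2 \times G_{\mathrm{odd}}$ the image in each factor equals the intersection with it, so $Z = (Z \cap G_2) \times (Z \cap G_{\mathrm{odd}})$. Hence the whole configuration factors:
\[
T_i = T_i^{(2)} \times T_i^{(o)}, \qquad Z_i = Z_i^{(2)} \times Z_i^{(o)} \qquad (1 \le i \le s).
\]
It therefore suffices to build Galois extensions $K_2/\QQ$ with group $G_2$ and $K_{\mathrm{odd}}/\QQ$ with group $G_{\mathrm{odd}}$, ramified inside one common indexed set of primes $\{l_1,\dots,l_s\}$, with $l_i$ realizing $T_i^{(2)},Z_i^{(2)}$ in $K_2$ and $T_i^{(o)},Z_i^{(o)}$ in $K_{\mathrm{odd}}$. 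Since $[K_2:\QQ]$ is a $2$-power and $[K_{\mathrm{odd}}:\QQ]$ is odd, the two fields are automatically linearly disjoint; and in $K = K_2 K_{\mathrm{odd}}$ the inertia and decomposition group of $l_i$ are the \emph{full} direct products of those in the factors (a subgroup surjecting onto each factor of a product of coprime order is everything), which by the displayed factorization are exactly $T_i$ and $Z_i$. Thus $K$ realizes $(G,\mathcal T,\mathcal Z)$.

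Next I would realize each factor separately, noting that the two factored configurations need not be minimal (some inertia components may be trivial). The odd-part configuration is realizable by the proof of Theorem~\ref{thm:oddabelian}: its inductive Chebotarev argument never uses minimality, so it applies when some $T_i^{(o)}$ are trivial (such $l_i$ being merely unramified in $K_{\mathrm{odd}}$ with $Z_i^{(o)}$ their prescribed Frobenius). The $2$-part configuration is realizable by the analysis of \S\ref{abelian2}: when $G_2 = \ZZ/2\ZZ \times \ZZ/2\ZZ$ the associated sign matrix is automatically a QR matrix (Theorem~\ref{thm:exponent2} in rank $\le 2$) and $K_2$ is the corresponding multiquadratic field; when $G_2 = \ZZ/2^n\ZZ$ is cyclic, $K_2$ is an appropriate cyclic subfield of a compositum of the fields $K_{2^n}(l_i)$ of Definition~\ref{def:Kp}. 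The essential point is that the obstruction groups encountered earlier---abelian $2$-groups of rank $\ge 3$, and those of rank $2$ but exponent $>2$ such as $\ZZ/4\ZZ \times \ZZ/2\ZZ$---are precisely those \emph{excluded} by the cyclic-or-Klein-four hypothesis, so no surviving quadratic-reciprocity obstruction lives in the $2$-part.

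The crux, and the step I expect to be the main obstacle, is \emph{coordinating the two realizations onto the same primes} $l_1,\dots,l_s$. One cannot realize $K_{\mathrm{odd}}$ first and afterwards fit $K_2$ to the resulting primes, since the splitting data of $K_2$ impose quadratic-residue relations among the $l_i$ that would already be fixed. Instead I would run a single Chebotarev argument, enlarging the field used in the induction of Theorem~\ref{thm:oddabelian} so as to impose simultaneously (i) the odd $n$-th power residue conditions furnished by the Reciprocity Theorem~\ref{thm:reciprocitytheorem} that pin down $T_i^{(o)},Z_i^{(o)}$, and (ii) the $2$-power (for cyclic $G_2$) or quadratic (for $G_2=\ZZ/2\ZZ\times\ZZ/2\ZZ$) conditions that pin down $T_i^{(2)},Z_i^{(2)}$, each reading off from equation~\eqref{eq: frobinKp}. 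The two families are governed by field extensions of coprime---odd versus $2$-power---degree, hence by linearly disjoint fields, so Chebotarev prescribes them independently. The only internal constraints are the quadratic-reciprocity relations \emph{within} the $2$-part (the reason $\QQ(\sqrt{l_1},\sqrt{l_2})$ and $\QQ(\sqrt{l_1^\ast},\sqrt{l_2^\ast})$ always meet, noted in the remark after Theorem~\ref{thm:oddabelian}), and these are consistent precisely because the $2$-part data come from a cyclic or Klein-four group. Carrying out this simultaneous selection of $\{l_1,\dots,l_s\}$ realizes both factors at once, and $K = K_2 K_{\mathrm{odd}}$ then completes the proof.
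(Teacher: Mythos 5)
Your reduction to realizing the $2$-part and the odd part on a common set of primes is sound, and is close in spirit to the paper's own reduction (to $G_2\times(\ZZ/n\ZZ)^s$ with $n$ odd, followed by a single Chebotarev argument exploiting the linear disjointness of the odd and $2$-power conditions). The genuine gap is the step you dispose of in one sentence: realizing the $2$-part configuration itself. That configuration is \emph{non-minimal}: a cyclic $2^A$-extension or a biquadratic extension in which up to $s$ primes ramify, each with a prescribed inertia subgroup \emph{and} a prescribed decomposition subgroup. Theorem~\ref{thm:exponent2} does not cover this --- it treats $(\ZZ/2\ZZ)^s$ with exactly $s$ ramified primes and a multiquadratic field of degree $2^s$, whereas your Klein-four field is $\QQ(\sqrt{N_\sigma},\sqrt{N_\tau})$ of degree $4$ with $N_\sigma,N_\tau$ products of several $l_j^*$; the splitting conditions there are \emph{products} of Legendre symbols $\prod_j\binom{l_j^*}{l_i}$, coupled to one another by quadratic reciprocity, and whether every assignment of decomposition groups is consistent with those couplings is precisely what has to be proved. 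Likewise ``an appropriate cyclic subfield of a compositum of the $K_{2^A}(l_i)$'' does not say which subfield, nor why every choice of decomposition subgroups $\gp{\sigma^{2^{b_i}}}$ with $b_i\le a_i$ can be achieved. Your closing claim that ``no surviving quadratic-reciprocity obstruction lives in the $2$-part'' because the obstructed groups are excluded by hypothesis is circular: it restates the theorem rather than proving it.

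The paper's proof is essentially an extended verification of that missing step. In the cyclic case the degree-$2^A$ field is taken to be the subfield of $K_0K_{2^A}(p)$ fixed by $g_1^{2^a}g_2$, where $K_0$ comes from an inductively realized rank-$(s-1)$ configuration and the new prime $p$ --- the one with full $2$-inertia, whose own $2$-part decomposition is \emph{forced} to be full, which is where one reciprocity relation gets absorbed --- is chosen with prescribed Frobenius in $\QQ(\zeta_{2^A})(l_2^{1/2^A},\dots,l_s^{1/2^A})$ so that the Reciprocity Theorem~\ref{thm:reciprocitytheorem} controls $\Fr_{K_{2^A}(p)/\QQ}(l_i)$. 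In the Klein-four case the base case $s=2$ requires choosing $q\equiv 3\pmod 4$ to force $K_{2n}(q)$ and $\QQ(\zeta_{2n},q^{1/2n})$ to be linearly disjoint, and the inductive step inserts correction factors $\epsilon$ into the rank-$(s-1)$ configuration so that the reciprocity relation between the new prime $l_s$ and $q$ (or $p$) comes out right. None of this is automatic, and the proposal does not supply it. (A secondary gap: your route also needs a version of Theorem~\ref{thm:oddabelian} for non-minimal odd configurations, where some $l_i$ are unramified in $K_{\mathrm{odd}}$ but still have prescribed Frobenius; this is plausible but is asserted, not proved, and the paper sidesteps it by lifting the odd part to $(\ZZ/n\ZZ)^s$ first.)
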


In both cases the argument will be by induction on the number of generators of $G$, with the inductive hypothesis
used to provide an `initial realization' for a tame decomposition configuration related to 
the desired configuration.  A Chebotarev density theorem argument will show the existence of an additional prime and
corresponding abelian extensions which, when taken together with the initial realization, can be used to construct a realization of
the desired decomposition configuration. 
While somewhat more technically detailed, the arguments are fundamentally similiar to those in the previous subsection,
so we indicate the proofs more briefly.  

We first note a variant of Proposition \ref{lift}.  Suppose $(H,\mathcal S, \mathcal W)$ and $(G,\mathcal T, \mathcal Z)$ 
are as in Proposition \ref{lift}.  Write $H = H_2 \times H_{\text {odd }}$ where $H_2$ is the Sylow-2 subgroup of $H$ and
$H_{\text {odd }}$ has odd order and write $G = G_2 \times G_{\text {odd }}$ similarly.  The surjection $\pi$ in the proof of
Proposition \ref{lift} restricted to $G_2$ gives a surjection to $H_2$, and if $J_2$ denotes the kernel of this latter
map, then $\pi$ induces a surjective homomorphism $G/J_2 \rightarrow H$, and the induced minimal tame decomposition configuration
on $G/J_2$ has the minimal tame decomposition configuration $(H,\mathcal S, \mathcal W)$ on $H$ as quotient. 

It follows that to show all tame decomposition configurations can be realized over $\QQ$ for finite abelian
groups $G$ whose Sylow-2 subgroup is cyclic or isomorphic to $\ZZ/2 \ZZ \times \ZZ/2 \ZZ$, it suffices to prove all
configurations can be realized in the following two cases:
\begin{enumerate}
\item[{(1)}]
$G \isom \ZZ/2^A\ZZ \times (\ZZ/n\ZZ)^s $, where $A \ge 1$, $s \ge 0$, and $n \ge 1$ is odd, and 

\item[{(2)}]
$G \isom (\ZZ/2 \ZZ \times \ZZ/2 \ZZ) \times (\ZZ/n\ZZ)^s $, where $s \ge 0$ and $n \ge 1$ is odd.
\end{enumerate}
By previous results, we may assume $s \ge 1$.  We handle each case in turn.

\subsubsection{Finite Abelian Groups with Cyclic Sylow-2 subgroup}
Suppose 
\begin{equation} \label{eq:cylics}
G =\gp{\sigma} \times \gp{x_1} \times \dots \times \gp{x_s} \isom \ZZ/2^A\ZZ \times (\ZZ/n\ZZ)^s
\end{equation}
with $A \ge 1$, $s \ge 1$ and $n \ge 1$ is odd.  Then a
tame decomposition configuration is given by a choice, for $i = 1,2, \dots, s$, of inertia groups
\begin{equation} \label{eq:Tconfigcyclic2}
T_i = \gp{\sigma^{2^{a_i}}} \times \gp{x_i} , \quad \text{ where $a_1 = 0$}, 
\end{equation}
that generate $G$, and a choice
\begin{equation} \label{eq:Zconfigcyclic2}
Z_i = \gp{\sigma^{2^{b_i}}} \times \gp{x_i, z_i} \quad \text{with $b_i \le a_i$ and} 
\quad z_i = \prod_{j=1}^s x_j^{a_{ij}} \quad (a_{ii} = 0) , 
\end{equation}
for the decomposition groups.  

If $s = 1$ then $G$ is cyclic, a case already considered.  Assume $s \ge 2$ and by induction 
that all tame decomposition configurations can be realized for groups as in \eqref{eq:cylics} of rank $s-1$.  
Let $a = \min (a_2, a_3, \dots , a_s)$ and consider the group of rank $s-1$ defined by
$$
\gp{T_2, T_3, \dots , T_s } = 
\gp { \sigma^{2^{a}} ,x_2,x_3, \dots , x_s}  \isom \ZZ / 2^{A - a} \ZZ \times (\ZZ / n \ZZ)^{s-1}.
$$
For $i = 2,3, \dots, s$ set $\tilde {z_i} = z_i / x_1^{a_{i1}} $
and consider the configuration defined by
\begin{align*}
\widetilde{T_i} & = T_i  = \gp{ \sigma^{2^{a_i}} , x_i} \isom \ZZ / 2^{A - a_i} \ZZ \times \ZZ / n \ZZ . \quad i = 2,3, \dots, s \\
\widetilde{Z_i} & = \gp{ \sigma^{2^{a_i}} , x_i, \tilde z_i} .
\end{align*}
By induction, this configuration has a realization over $\QQ$ which is the composite of the extension
$K_n(l_2) \cdots K_n (l_s)$ for primes $l_2, \dots , l_s$ and a cyclic extension $K_0$ of degree 
$2^{A - a}$.  For each $i = 2,3, \dots, s$, the
prime $l_i$ is ramified of degree $2^{A - a_i}$ in $K_0$ and since this is the precise
power of 2 in $\order{\widetilde{Z_i}}$, the prime $l_i$ is otherwise totally split in $K_0$.  

We now find another prime $p$ to adjust this `initial configuration' to realize the decomposition \eqref{eq:Tconfigcyclic2} and
\eqref{eq:Zconfigcyclic2}.  

Choose the prime $p$ distinct from $l_2, \dots , l_s$ so that $p \equiv 1 $ modulo $2^A$ and so that
for $i = 2, 3, \dots , s$ the residue degree of $l_i$ in $K_{2^A}(p)$ is $2^{A - b_i}$ and $l_i$ is otherwise split, i.e.,
so that $Z_{K_{2^A}(p)/\QQ}(l_i) = \gp {{g_1}^{\! 2^{b_i}}}$ if $\Gal(K_{2^A}(p) / \QQ) = \gp{g_1}$.
By the Reciprocity Theorem \ref{thm:reciprocitytheorem}, this is 
a choice of Frobenius for $p$ in the extension $\QQ(\zeta_{2^A})( l_2^{1/2^A}, \dots ,  l_s^{1/2^A} )$.

Write $\Gal(K_0 / \QQ) = \gp{g_2}$, so the composite of $K_0$ and $K_{2^A}(p)$ has Galois 
group $\Gal(K_0 K_{2^A}(p)/\QQ) = \Gal(K_{2^A}(p)/\QQ) \times \Gal(K_0/\QQ) = \gp{g_1} \times \gp{g_2}$ where we 
lift $g_1$ by having it act trivially on $K_0$ and we lift $g_2$ similarly.  Define the field
$F$ to be the subfield of $K_0 K_{2^A}(p)$ fixed by the subgroup generated by ${g_1}^{\! 2^{a}} g_2$.

The field $F$ is a cyclic extension of $\QQ$ of degree $2^A$.  If we let $\Gal(F/\QQ) = \gp{\sigma}$,
a straightforward computation of the images in $\Gal (F/\QQ)$ of the inertia and 
decomposition groups for $p, l_2, \dots , l_s$ in $K_0 K_{2^A}(p)$ shows that
$$
T_{F/\QQ} (p) = \gp{\sigma}, \quad Z_{F/\QQ} (p) = \gp{\sigma}, \quad T_{F/\QQ} (l_i) = \gp{\sigma^{2^{a_i}}}, 
\quad Z_{F/\QQ} (l_i) = \gp{\sigma^{2^{b_i}}} ,
$$
i.e., $F$ realizes the even part of the tame decomposition 
configuration \eqref{eq:Tconfigcyclic2} and \eqref{eq:Zconfigcyclic2}. We may impose additional conditions
on $p$ as in the previous subsection so that the field $ K_n(p) K_n(l_2) \dots K_n(l_s)$
realizes the odd part of the configuration. 

In summary, the conditions required on the prime $p$ are the following:
\begin{enumerate}
\item[{(1)}]
$p$ is distinct from $l_2, \dots , l_s$, 

\item[{(2)}]
a choice of Frobenius for $p$ in the extension $\QQ(\zeta_{2^A})( l_2^{1/2^A}, \dots ,  l_s^{1/2^A} )$,

\item[{(3)}]
a choice of Frobenius for $p$ in the extension $K_n(l_2) \cdots K_n (l_s)$, and

\item[{(4)}]
a choice of Frobenius for $p$ in the extension $\QQ(\zeta_{n})( l_2^{1/n}, \dots ,  l_s^{1/n} )$
(which includes the condition $p \equiv 1 $ modulo $n$).

\end{enumerate}
Since the fields involved are linearly disjoint, Chebotarev's density theorem ensures the
existence of a prime $p$ satisfying all of the necessary conditions simultaneously, and for this prime, the
composite field $ F K_n(p) K_n(l_2) \dots K_n(l_s)$ gives a realization of the 
tame decomposition configuration \eqref{eq:Tconfigcyclic2} and \eqref{eq:Zconfigcyclic2}.

\subsubsection{Finite Abelian Groups with Sylow-2 subgroup $\ZZ/2\ZZ \times \ZZ/2\ZZ$}
Suppose that $G \isom (\ZZ/2 \ZZ \times \ZZ/2 \ZZ) \times (\ZZ/n\ZZ)^s $, where $s \ge 0$ and $n \ge 1$ is odd.
The cases $s = 0$ (which has already been considered in section \ref{abelian2} in any case) and 
$s = 1$ are quotients of the case $s = 2$ in the sense of Definition \ref{def:quotient},
so it suffices to prove Theorem \ref{thm:Sylow2abelian} when $s \ge 2$.  We proceed by induction on $s$.

If $s = 2$, $G = T_1 \times T_2 \isom \ZZ/(2n) \ZZ \times  \ZZ/(2n) \ZZ$, so a realization 
would be the composite $K_{2n}(p) K_{2n}(q)$ for some primes $p$ and $q$ both
congruent to 1 modulo $2n$.  To prove the existence of appropriate primes $p$ and $q$ we
can proceed as we did for abelian groups of odd order, as follows.  First 
let $q$ be any prime congruent to 3 modulo 4 and also congruent to 1 modulo $2n$.  A tame
decomposition configuration specifies the further splitting of the prime $q$ in $K_{2n}(p)$ for
a prime $p \equiv 1$ modulo $n$ and the further splitting of $p$ 
in $K_{2n}(q)$.  The splitting for $p$ is the choice of a Frobenius element for $p$ in $K_{2n}(q)$, and by 
the Reciprocity Theorem \ref{thm:reciprocitytheorem}, the
splitting for $q$ (a choice of Frobenius element for $q$ in $K_{2n}(p)$) is a choice of a
Frobenius element for $p$ in the extension $\QQ(\zeta_{2n}, q^{1/2n})$.
Since $q$ was chosen congruent to 3 modulo 4, the 
quadratic subfield of $K_{2n}(q)$ is $\QQ(\sqrt{-q})$ and
it follows that $K_{2n}(q)$ and $\QQ(\zeta_{2n}, q^{1/2n})$ are linearly
disjoint.  By Chebotarev's density theorem, there exists a prime $p$ satisfying all the required constraints
simultaneously, completing the proof when $s = 2$.

Suppose now that $s \ge 3$.

The groups $T_1, \dots , T_s$ in a tame decomposition configuration generate $G$, 
so a realization over $\QQ$ would be a composite of a biquadratic extension $K$ with Galois
group $\Gal(K/\QQ) = \gp{\sigma, \tau}$ and fields
$K_n(p)$, $K_n(q)$, $K_n(l_3) \dots K_n(l_s)$
for some primes $p, q, l_3, \dots , l_s$, each of which is congruent to 1 modulo $n$, whose ramification
information would be given by 
\begin{align} 
G = \gp{\sigma, \tau} \times \gp{x_1} \times \dots \times \gp{x_s} 
     & \isom (\ZZ/2\ZZ \times \ZZ/2\ZZ) \times (\ZZ/n\ZZ)^s  \label{eq:KleinSylowG} \\
T(p) = \gp{\sigma} \times \gp{x_1} \isom \ZZ/(2n) \ZZ,   
   \qquad  & T(q)  = \gp{\tau} \times \gp{x_2} \isom \ZZ/(2n) \ZZ, \label{eq:T1KleinSylowG} \\
T(l_i) = \gp{y_i} \times \gp{x_i}, \qquad & i = 3,4, \dots, s ,  \label{eq:T2KleinSylowG} 
.
\end{align}
where for $i = 3, 4, \dots , s$, the element $y_i$ is one of $1, \sigma, \tau, $ or $\sigma \tau$.  
The biquadratic field $K$ would be uniquely determined by
this ramification information: if
\begin{equation} \label{eq:biquadraticNs}
N_\sigma = \prod_{y_i \in \{\tau, \sigma \tau \} } l_i^* , \qquad
N_\tau = \prod_{y_i \in \{\sigma, \sigma \tau \} } l_i^* , \qquad
N_{\sigma \tau} = \prod_{y_i \in \{\sigma, \tau \} } l_i^* ,
\end{equation}
then $K = \QQ( \sqrt{ N_\sigma } , \sqrt{ N_\tau } )$, with quadratic subfields
$K_\sigma = \QQ( \sqrt{ N_\sigma } )$, $K_\tau = \QQ( \sqrt{ N_\tau } )$,
$K_{\sigma \tau} = \QQ( \sqrt{ N_{\sigma \tau} } )$, fixed by $\sigma$, $\tau$ and $\sigma \tau$, respectively.
We have $p \mid N_\tau, N_{\sigma \tau}$, and $q \mid N_\sigma, N_{\sigma \tau}$.
The primes ramifying in $K_\sigma$ are the primes whose nontrivial inertia group is distinct from
$\gp{\sigma}$, and similarly for the other two quadratic subfields of $K$.  

The remaining configuration information given by a choice of decomposition groups:
\begin{equation} \label{eq:ZconfigKlein}
\begin{aligned}
Z(p) = Z_1 = \gp{\sigma, w_p} \times \gp{x_1, z_1} , \quad  \quad  Z(q)  & = Z_2 = \gp{\tau, w_q} \times \gp{x_2, z_2} , \\
\text{and } \quad Z(l_i)  = Z_i = \gp{y_i, w_i} \times \gp{x_i, z_i} & , \ \  i = 3,4, \dots , s, 
\end{aligned}
\end{equation}
where 
$ z_i = \prod_{j=1}^s x_j^{a_{ij}}$ (and $a_{ii} = 0$) for $ i = 1,2, \dots , s$,
and where $w_p$, $w_q$ and $w_i$ for  $i = 3, 4, \dots , s$ are in $\gp{\sigma, \tau }$.

To prove the existence of suitable primes $p,q,l_3, \dots , l_s$,
we first apply the inductive hypothesis to a decomposition configuration (that will depend on $T_s$ and $Z_s$)
for a group of rank $s-1$ to obtain the primes $p,q,l_3, \dots , l_{s-1}$ and then show the existence
of a prime $l_s$ and a modification of this initial configuration that realizes the rank $s$ configuration.

Suppose first that $T_s = \gp{\sigma}$.
If $T_s = Z_s$, define $\epsilon = 1$ and if $T_s \neq Z_s$, define $\epsilon = \sigma$.
By induction applied to the group $\gp{\sigma, \tau} \times \gp{x_1} \times \dots \times \gp{x_{s-1}}$ of rank $s-1$,
there are primes $p,q,l_3, \dots , l_{s-1}$ and a realization 
$K_0 K_n(p) K_n(q) K_n(l_3) \dots K_n(l_s)$
over $\QQ$ with the tame decomposition configuration 
\begin{align} 
T(p) & = \gp{\sigma} \times \gp{x_1}  &    Z(p) & = \gp{\sigma, w_p} \times \gp{x_1, z_1/x_s^{a_{1,s}} } \label{eq:inductiveTl} \\
T(q) & = \gp{\tau} \times \gp{x_2}  &     Z(q) & = \gp{\tau, \epsilon w_q} \times \gp{x_2, z_2/x_s^{a_{2,s}} } \label{eq:inductiveT2} \\
T(l_i) & = \gp{y_i} \times \gp{x_i}  &     Z(l_i) & = \gp{y_i,w_i} \times \gp{x_i, z_i/x_s^{a_{i,s}} } \, , \ i = 3,4,\dots,s-1
\label{eq:inductiveT3}
\end{align}
obtained from the first $s-1$ conditions in \eqref{eq:ZconfigKlein} by modifying $w_q$ in $Z(q)$ by the $\epsilon$
defined above, and, as 
in the previous subsection, modifying the elements $z_i$ for $1 \le i \le s-1$.
Let $N'_\sigma$, $N'_\tau$ and $N'_{\sigma \tau}$ be the integers in \eqref{eq:biquadraticNs} for the
biquadratic field $K_0$ for this rank $s-1$ realization.

Choose a prime $l_s \equiv 1$ modulo $4n$ such that 

\begin{enumerate}
\item
$ \dbinom{l_s}{q} = 
\begin{cases} 
+1 & \text{if } w_s \in \gp{\sigma} \\
-1 & \text{if } w_s \notin \gp{\sigma} ,
\end{cases}
$

\item
$ \dbinom{l_s}{l} = + 1$, for $l \in \{p, l_3, \dots , l_{s-1} \}$,

\item
the Frobenius for $l_s$ in $K_n(p) K_n(q) K_n(l_3) \cdots K_n (l_{s-1})$
ensures the odd part of the decomposition of $l_s$ in $K_n(p) K_n(q) K_n(l_3) \cdots K_n (l_{s-1})$ is as needed, and

\item
the Frobenius for $l_s$ in $\QQ(\zeta_{n})( p^{1/n}, q^{1/n}, l_3^{1/n}, \dots ,  l_{s-1}^{1/n} )$
ensures (by the Reciprocity Theorem \eqref{thm:reciprocitytheorem})
the odd part of the decomposition of $p,q,l_3, \dots , l_{s-1}$ in $K_n(l_s)$ is as needed.

\end{enumerate}
Again, since the fields involved are linearly disjoint, such a prime $l_s$ exists by Chebotarev's density theorem.

\begin{remark}
The condition in (2) can be relaxed to include only $p$ and those primes $l_i$ whose 
associated $y_i$ in \eqref{eq:inductiveT3} is not trivial.  

\end{remark}

Let $N_\sigma  = N'_\sigma$, $N_\tau = l_s^* N'_\tau = l_s N'_\tau $ and 
$N_{\sigma \tau} =l_s^* N'_{\sigma \tau} =l_s N'_{\sigma \tau}$, and define
$K = \QQ( \sqrt{N_\sigma} , \sqrt{N_\tau} )$.  Then $K$ is a biquadratic extension of $\QQ$ with quadratic subfields
$K_\sigma = \QQ( \sqrt{ N_\sigma } )$, $K_\tau = \QQ( \sqrt{ N_\tau } )$, and 
$K_{\sigma \tau} = \QQ( \sqrt{ N_{\sigma \tau} } )$.  It is then straightforward to check that the
composite field $K K_n(p) K_n(q) K_n(l_3) \dots K_n(l_s)$ is a realization of the decomposition configuration
\eqref{eq:T1KleinSylowG}, \eqref{eq:T2KleinSylowG} and \eqref{eq:ZconfigKlein}.  For example, the even part of
the decomposition behavior (i.e., the 2-primary parts of the ramification and decomposition groups) is 
realized in the biquadratic field $K$, as follows.
The field $K_\sigma$ is the same for $K_0$ and $K$, and the fact that $l_s = l_s^*$ is a square modulo
$p, l_3, \dots , l_{s-1}$ by condition (2) shows the decomposition behavior for these primes in $K$ is as desired.

The decomposition of $l_s$ in $K$ is the question of whether $l_s$ is split or inert in $K_\sigma$ (it is
ramified in the other two quadratic subfields), i.e., by   
the Legendre symbol $\binom{ N_\sigma }{l_s}$, which equals  
$\binom{ l_s }{q}$ by condition (2) and the fact that $l_s \equiv 1$ modulo 4.
By condition (1), this is precisely the desired decomposition for $l_s$ in \eqref{eq:T2KleinSylowG}, as desired.

The decomposition of $q$ in $K$ is determined by the splitting of $q$ in 
$K_\tau$, so by the Legendre symbol $\binom{ N_\tau l_s }{q} = \binom{ N_\tau}{q} \binom{l_s }{q}$.  
By the choice of the inductive configuration, $\binom{ N_\tau}{q} = +1$ if and only if $\epsilon w_q \in \gp{\tau}$
and by condition (2), $\binom{ l_s }{q} = +1$ if and only if $w_s \in \gp{\sigma}$, i.e., if and only if
$T_s = Z_s$.  By the definition of $\epsilon$, a quick check shows
$\binom{ N_\tau l_s }{q} = +1$ if and only if
$w_q \in \gp{\tau}$, so $q$ decomposes as desired in $K$.

The cases $T_s = \gp{\tau}$ and $T_s = \gp{\sigma \tau}$ are handled similarly, as follows.
In both cases, let $\epsilon = 1$ if $Z_s = T_s$, let $\epsilon$ be the generator of $T_s$ if $Z_s \ne T_s$,
and for the inductive rank $s-1$ decomposition configuration
replace \eqref{eq:inductiveTl} and \eqref{eq:inductiveT2} with 
\begin{align} 
T(p) & = \gp{\sigma} \times \gp{x_1}  &    Z(p) & = \gp{\sigma, \epsilon w_p} \times \gp{x_1, z_1/x_s^{a_{1,s}} }  
\tag{ {\ref{eq:inductiveTl}}' }  \\
T(q) & = \gp{\tau} \times \gp{x_2}  &     Z(q) & = \gp{\tau, w_q} \times \gp{x_2, z_2/x_s^{a_{2,s}} } 
\tag{ {\ref{eq:inductiveT2}}' }  
\end{align}
Let $N'_\sigma$, $N'_\tau$ and $N'_{\sigma \tau}$ be the integers in \eqref{eq:biquadraticNs} for the
biquadratic field $K_0$ for this rank $s-1$ realization
and if $T_s = \gp{\tau}$ (resp., $T_s = \gp{\sigma \tau}$), let 
$N_\sigma  = l_s N'_\sigma $, $N_\tau = N'_\tau $ and 
$N_{\sigma \tau} = l_s N'_{\sigma \tau}$
(resp., $N_\sigma  = l_s N'_\sigma $, $N_\tau = l_s  N'_\tau $ and 
$N_{\sigma \tau} = N'_{\sigma \tau}$).  Set $K = \QQ( \sqrt{N_\sigma}, \sqrt{N_\tau})$.
Choose (by Chebotarev's density theorem) a prime $l_s \equiv 1$ modulo $4n$ so that 
(1) $\binom{l_s}{p} = +1$ if $T_s = Z_s$ and $ \binom{l_s}{p} = -1$ if $T_s \ne Z_s$, (2)
$\binom{l_s}{l} = +1$ for $l \in \{q, l_3, \dots , l_{s-1} \}$, and so that the earlier conditions (3) and (4) 
are all satisfied.  

Finally, if $T_s = 1$, set $\epsilon = 1$, take 
\eqref{eq:inductiveTl}-\eqref{eq:inductiveT3} for the inductive configuration, take $K = K_0$,
and replace conditions (1) and (2) for the prime $l_s \equiv 1$ modulo $4n$ by the condition that its Frobenius in $K$ gives
the correct decomposition group $Z_s$.

In all cases, it is easy to check as before that 
$K K_n(p) K_n(q) K_n(l_3) \dots K_n(l_s)$ is a realization of the decomposition configuration, 
completing the proof of Theorem  \eqref{thm:Sylow2abelian}.

\section{Groups of Small Order}

In this section we consider the realizations of (minimal) tame decomposition configurations of some nonabelian groups. 
In many cases there are restrictions on the configurations that can be realized (for example, every minimally ramified 
tame $Q_8$ extension is necessarily a split tame decomposition configuration), and, 
for those that can be realized, we give explicit realizations.  We also observe that it is a 
difficult (open) question to determine if a given tame decomposition 
configuration that can be realized over $\QQ$ in fact has infinitely many different realizations; this 
is already an interesting problem for the groups considered here.

\subsection{Nonabelian Groups of order 8}\label{order8}

Consider first $G=Q_8$, the quaternion group of order $8$.  Then $s=2$ and a tame decomposition configuration  $(G, \mathcal T, \mathcal Z)$ 
must have $T_1$ and $T_2$ cyclic groups of order $4$;  up to evident equivalence, there are three possible
configurations, depending on whether $T_i = Z_i$ or not.  If $K/\Q$ is a realization 
then the unique biquadratic subfield $F$ of $K$, $F= \Q(\sqrt {p_1^*},\sqrt{p_2^*}),$ must be totally real,
hence $p_1 \equiv p_2 \equiv 1 \pmod 4$.  As before, this imposes a quadratic reciprocity constraint that
implies $Z_1 =T_1$ if and only if $Z_2=T_2$.   But $T_1 \subsetneq Z_1$ means that $Z_1=Q_8$, and since only 
$\ZZ/8 \ZZ$ and $\ZZ/2\ZZ \times \ZZ/4\ZZ$ occur as Galois groups of order 8 over 
$\Q_p$ if  $p \equiv 1 \pmod 4$ this cannot occur. 
Alternatively, a theorem of Witt \cite{W} states that 
$\QQ(\sqrt a, \sqrt b)$ can be embedded in a $Q_8$ extension if and only if $(-a,-b) = (-1,-1)$ (Hilbert symbols), which for
$a = p_1^*$, $b = p_2^*$ is equivalent to $p_1 \equiv p_2 \equiv 1 \pmod 4$ and 
$\big(\frac{p_1}{p_2}\big)= \big(\frac{p_2}{p_1}\big) = +1$.
Therefore only the configuration with $Z_1=T_1$ and $Z_2=T_2$ could be realizable. In 
\cite{F}, Fr\"ohlich shows this Witt condition is both necessary and sufficient for the existence of a (unique)
$Q_8$ extension of $\QQ$ ramified only at the primes $p_1$ and $p_2$, and 
Schmid in \cite{S} identifies it explicitly.

Next consider $G=D_8 = \langle r,s \mid r^4 = s^2 = 1, sr = r^{-1} s \rangle$, the dihedral group of order $8$.  
Again the rank is 2, 
and up to an isomorphism of $G$ or an interchange of $\{ T_1, Z_1 \}$ and $\{ T_2, Z_2 \}$, there are seven
possible tame decomposition configurations for $G$.  Searching the number field data base \cite{JR} we find
that all seven configurations for $D_8$ can be realized over $\QQ$; Table \ref{table:D8} gives the possible tame configurations 
$\{ T_1, Z_1 \}$ and $\{ T_2, Z_2 \}$ for $D_8$, and for each a polynomial and primes $p_1$ and $p_2$
(with ramification index $e$ and inertial degree $f$) realizing the configuration.

\begin {table}[ht]    
\begin{center}
  \begin{tabular}{c  c  c  c | c  c | p{5.3cm} }
$T_1$ & $Z_1$ & $T_2$ & $Z_2$  & $p_1 \ (e,f)$ & $p_2 \ (e,f)$ & \quad \qquad \text{realization} \\ \hline
\multicolumn{1}{|r}{$\gp{r}$} & $\gp{r}$    &   $\gp{s}$  &   $\gp{s}$  & 5 (4,1) & 29 (2,1)
             & \multicolumn{1}{p{5.3cm}|}{$x^8 - x^6 - 4 x^4 - 16 x^2 + 256$} \\ \hline
\multicolumn{1}{|r}{$\gp{r}$} & $\gp{r}$    &   $\gp{s}$  &   $\gp{s,r^2}$   & 13 (4,1)   & 3 (2,2)
             & \multicolumn{1}{p{5.3cm}|}{$x^8 - 9 x^6 + 32 x^4 - 9 x^2 + 1$} 
             \\ \hline
\multicolumn{1}{|r}{$\gp{r}$} & $G$    &   $\gp{s}$  &   $\gp{s}$   &  23 (4,2)  &  3 (2,1)
             & \multicolumn{1}{p{5.3cm}|}{$x^8 - 3 x^7 + 7 x^6 - 12 x^5 - 8 x^4 + 84 x^3 + 159 x^2 + 63 x + 9$} 
             \\ \hline
\multicolumn{1}{|r}{$\gp{r}$} & $G$    &   $\gp{s}$  &   $\gp{s,r^2}$   & 3 (4,2)  &  7 (2,2)
             & \multicolumn{1}{p{5.3cm}|}{$x^8 - 3 x^7 + 4 x^6 - 3 x^5 + 3x^4 - 3x^3 + 4x^2 - 3x + 1$} 
             \\ \hline
\multicolumn{1}{|r}{$\gp{s}$} & $\gp{s}$   &   $\gp{sr}$  &   $\gp{sr}$   & 3 (2,1)   &  37 (2,2)
             & \multicolumn{1}{p{5.3cm}|}{$x^8 - 5x^6 + 28x^4 + 15x^2 + 9$} 
             \\ \hline
\multicolumn{1}{|r}{$\gp{s}$} & $\gp{s}$    &   $\gp{sr}$  &   $\gp{sr,r^2}$  & 5 (2,1)  & 41 (2,2)
             & \multicolumn{1}{p{5.3cm}|}{$x^8 + 15x^6 + 48x^4 + 15x^2 + 1$}
             \\ \hline
\multicolumn{1}{|r}{$\gp{s}$} & $\gp{s,r^2}$    &   $\gp{sr}$  &   $\gp{sr,r^2}$ & 3 (2,2) & 13 (2,2)
             & \multicolumn{1}{p{5.3cm}|}{$x^8 - x^7 + 2x^6 + 3x^5 - x^4 + 3x^3 + 2x^2 - x + 1$}
             \\ \hline    
  \end{tabular}
\\[5pt] 
 \caption {Decomposition Configurations for $D_8$} \label{table:D8} 
 \end{center}
\end {table}

\subsection{The groups $D_{10}$, $A_4$, $F_{20}$} \label{d10f20a4}
The dihedral group of order 10, the alternating group of order 12, and the Frobenius group of order 20 are all of rank 
$s = 1$ and have, up to
an isomorphism of the group, a unique tame decomposition configuration.  Each is realized over $\QQ$:

$D_{10}: x^5 - 2 x^4 + 2 x^3 - x^2 + 1 \ (p = 47)$,

$A_4 : x^4 - x^3 - 7 x^2 + 2 x + 9  \ (p = 163)$,

$F_{20}: x^5 - 2 x^4 + 7 x^3 - 4 x^2 + 11 x + 6 \ (p = 101)$.

\begin{remark}
It is not difficult to show that any Frobenius group $F = K \rtimes H$ whose Frobenius complement $H$ is
cyclic is of rank $s = 1$ and has a unique tame decomposition configuration $T = Z = H$ up to isomorphism.
This includes the dihedral groups $D_{2n}$ of order $2n$ where $n$ is odd ($H$ any subgroup of order 2) 
as well as $A_4$ ($H$ any subgroup of order 3) and $F_{20}$ (where $H$ is a cyclic subgroup of order 4).  
For such groups $F$ the existence of a realization over $\QQ$ is just the question of realizing $F$ as
a Galois group over $\QQ$ by an extension with a single (tamely) ramified prime.  

\end{remark}

\begin{remark}
In any realization $K/\Q$ of the unique tame decomposition configuration for $D_{2n}$ with $n$ odd (for example, 
for the symmetric group $S_3$), the (unique) quadratic subfield would be
$k = \Q(\sqrt{(-1)^{(p-1)/2} p})$ for some odd prime $p\in \Z$ with $K/k$ unramified at finite primes.  
It follows by class field theory that there is a tame realization for $D_{2n}$ if and only if 
the class group of the quadratic field $\Q(\sqrt{(-1)^{(p-1)/2} p})$ contains an element of order $n$ for the 
odd prime $p$ not dividing $n$.  For $S_3$, this occurs (for $p = 23$ for example), presumably infinitely often, 
with interesting questions on the statistics.

\end{remark}

\subsection{The groups $S_4$, $A_5$, $S_5$} \label{a5s5}

Up to isomorphism, there are four distinct tame decomposition configurations for the 
symmetric group $S_4$, six configurations for the alternating group $A_5$,
and seven configurations for the symmetric group $S_5$.  
Searching the number field data base \cite{JR} we find
that all seventeen configurations for these three groups of rank $s = 1$
can be realized over $\QQ$; Tables \ref{table:S4},  \ref{table:A5} and \ref{table:S5}
give the possible tame configurations 
$\{ T, Z \}$ and for each a prime $p$ and a polynomial $f(x)$ whose Galois closure realizes the configuration. 
We also indicate the splitting of the prime $p$ in the extension $F$ generated by a root of $f(x)$, which
can be computed from the double coset decomposition of the group $G$ under the actions of $Z$ and the
subgroup $H$ of $K$ fixing $F$.  The cases where distinct $Z$ yield the same splitting in $F$ are 
distinguished by the largest subgroup $N$ of $Z \cap H$ normal in $Z$---the quotient $Z/N$ is provided
in the splitting data in \cite{JR} as the Galois closure of the appropriate completion of $F$.  
All primes have residue degree 1 unless otherwise indicated. 

\begin {table}[ht]    
\begin{center}
  \begin{tabular}{c  c  | c  |  c | p{4.5cm} }
$T$ & $Z$ &  $p$ & \text{splitting} & \quad \qquad \text{realization} \\ \hline
\multicolumn{1}{|c}{$\gp{ (1 \ 2)}$} &  
             \multicolumn{1}{p{3.0cm} | }{ $\gp{ (1 \ 2) }$ }  
             &   283
             & $ \ \underset{ }{\wp_1^2}  \ \underset{ }{\wp_2} \ \underset{ }{\wp_3} $
             & \multicolumn{1}{p{4.5cm}|}{$x^4 - x - 1$} 
             \\ \hline
\multicolumn{1}{ | c}{$\gp{(1 \ 2) }$} &  
             \multicolumn{1}{p{3.0cm} | }{  $\gp { (1 \ 2),  (3 \ 4) } $ }
             &    229  
             & $ \ \underset{ }{\wp_1^2}  \ \underset{(f=2)}{\wp_2}$
             & \multicolumn{1}{p{4.5cm}|}{$x^4 - x + 1$} 
             \\ \hline
\multicolumn{1}{|c}{$\gp{(1 \ 2 \ 3 \ 4)}$} & 
             \multicolumn{1}{p{3.0cm} | }{$\gp{(1 \ 2 \ 3 \ 4) }$}  
             &   229
             & $\ \underset{ }{\wp^4}$ 
             & \multicolumn{1}{p{4.5cm}|}{$ x^4 - x^3 + 29 x^2 - 43 x + 17$} 
             \\ \hline
\multicolumn{1}{|c}{$\gp{(1 \ 2 \ 3 \ 4) }$} & 
             \multicolumn{1}{p{3.0cm} | }{  $\gp{ (1 \ 2 \ 3 \ 4), (1 \ 3) }$ }    
             &    59
             & $\ \underset{ }{\wp^4}$
             & \multicolumn{1}{p{4.5cm}|}{$x^4 - x^3 - 7 x^2 + 11 x + 3$} 
             \\ \hline
  \end{tabular}
\\[5pt] 
 \caption {Decomposition Configurations for $S_4$} \label{table:S4} 
 \end{center}
\end {table}

\begin {table}[ht] 
\begin{center}
  \begin{tabular}{c  c  | c  |  c | p{4.5cm} }
$T$ & $Z$ &  $p$ & \text{splitting} & \quad \qquad \text{realization} \\ \hline
\multicolumn{1}{|c}{$\gp{ (1 \ 2 \ 3)}$} &  
             \multicolumn{1}{p{2.2cm} | }{ $\gp{ (1 \ 2 \ 3) }$ }  
             &    10267   
             & $ \ \underset{}{\wp_1^3}  \ \underset{}{\wp_2} \ \underset{}{\wp_3} $
             & \multicolumn{1}{p{4.5cm}|}{$x^5 - 25 x^3 - 7 x^2 + 116 x - 45$} 
             \\ \hline
\multicolumn{1}{ | c}{$\gp{(1 \ 2 \ 3) }$} &  
             \multicolumn{1}{p{2.2cm} | }{  $\langle (1 \ 2 \ 3)$, \hfill\break ${ } \ \ (1 \ 2 )(4 \ 5) \rangle$ }
             &   4253
             & $\ \underset{}{\wp_1^3}  \ \underset{(f=2)}{\wp_2}$
             & \multicolumn{1}{p{4.5cm}|}{$x^5 - 2 x^4 - 10 x^3 + 23 x^2 - 6 x - 4$} 
             \\ \hline
\multicolumn{1}{|c}{$\gp{(1 \ 2 \ 3 \ 4 \ 5)}$} & 
             \multicolumn{1}{p{2.2cm} | }{$\gp{(1 \ 2 \ 3 \ 4 \ 5) }$}  
             &   1951
             & $\ \underset{}{\wp_1^5}$ 
             & \multicolumn{1}{p{4.5cm}|}{$ x^5  - x^4  - 780 x^3 +9911 x^2 - 24208 x + 15952$} 
             \\ \hline
\multicolumn{1}{|c}{$\gp{(1 \ 2 \ 3 \ 4 \ 5) }$} & 
             \multicolumn{1}{p{2.2cm} | }{  $\langle (1 \ 2 \ 3 \ 4 \ 5)$, ${ } \ \  (2 \ 5) (3 \ 4) \rangle$ }    
             &    1039
             & $\ \underset{}{\wp_1^5}$
             & \multicolumn{1}{p{4.5cm}|}{$x^5 - 2 x^4 - 414 x^3 + 4945 x^2 - 16574 x + 5191$} 
             \\ \hline
\multicolumn{1}{|c}{$\gp{(1 \ 2 ) (3 \ 4 ) }$} & 
              \multicolumn{1}{p{2.2cm} | }{$\gp{(1 \ 2 ) (3 \ 4 )}$}  
             &    2083   
             & $ \ \underset{}{\wp_1^2} \ \underset{}{\wp_2^2} \ \underset{}{\wp_3} $
             & \multicolumn{1}{p{4.5cm}|}{$x^5 - x^4 + 5 x^3 + 11 x^2 + 4 x - 1$} 
             \\ \hline
\multicolumn{1}{|c}{$\gp{(1 \ 2 ) (3 \ 4 )}$} & 
              \multicolumn{1}{p{2.2cm} | }{  $\langle (1 \ 2 ) (3 \ 4 )$,  ${ } \ \  (1 \ 3 ) (2 \ 4 ) \rangle$ } 
             &   653
             & $\ \underset{(f=2)}{\wp_1^2} \ \underset{}{\wp_2}$
             & \multicolumn{1}{p{4.5cm}|}{$x^5 + 3 x^3 - 6 x^2 + 2x - 1$} 
             \\ \hline
  \end{tabular}
\\[5pt] 
 \caption {Decomposition Configurations for $A_5$} \label{table:A5} 
 \end{center}
\end {table}

\begin {table}[ht] 
\begin{center}
  \begin{tabular}{c  c  | c  |  c | p{3.5cm} }
$T$ & $Z$ &  $p$ & \text{splitting} & \quad \ \ \text{realization} \\ \hline
\multicolumn{1}{|c}{ $\gp{ (1 \ 2) } $ } &  
             \multicolumn{1}{p{2.3cm} | }{ $ \gp{(1 \ 2) } $ }  
             &    13219
             &  $ \underset{}{\wp_1^2} \ \underset{}{\wp_2} \ \underset{}{\wp_3} \ \underset{}{\wp_4} $
             & \multicolumn{1}{p{3.5cm}|}{$  x^5 - 2 x^2 - x + 1 $} 
             \\ \hline
\multicolumn{1}{|c}{ $\gp{ (1 \ 2) } $ } &  
             \multicolumn{1}{p{2.3cm} | }{ $ \gp{ (1 \ 2), (3 \ 4) } $ }  
             &    1609
             &  $ \underset{}{\wp_1^2} \ \underset{(f=2)}{\wp_2} \ \underset{}{\wp_3}{} $
             & \multicolumn{1}{p{3.5cm}|}{$ x^5 - x^3 - x^2 + x + 1  $} 
             \\ \hline
\multicolumn{1}{|c}{ $\gp{ (1 \ 2)  } $ } &  
             \multicolumn{1}{p{2.3cm} | }{  $\langle (1 \ 2), \ \ \ \ \ $  ${ } \ \ (3 \ 4 \ 5) \rangle$ } 
             &    4903
             &  $ \underset{}{\wp_1^2} \ \underset{(f=3)}{\wp_2} $
             & \multicolumn{1}{p{3.5cm}|}{$ x^5 - x^4 - x^3 + 2 x^2 - x - 1 $} 
             \\ \hline
\multicolumn{1}{|c}{ $\gp{  (1 \ 2) (3 \ 4 \ 5) } $ } &  
             \multicolumn{1}{p{2.3cm} | }{ $ \gp{ (1 \ 2) (3 \ 4 \ 5)  } $ }  
             &   151
             &  $ \underset{}{\wp_1^2} \ \underset{}{\wp_2^3} $
             & \multicolumn{1}{p{3.5cm}|}{$ x^5 - 2 x^4 - x^3 + 7 x^2 - 13 x + 7 $} 
             \\ \hline
\multicolumn{1}{|c}{ $\gp{ (1 \ 2) (3 \ 4 \ 5) } $ } &  
             \multicolumn{1}{p{2.3cm} | }{  $\langle (1 \ 2) (3 \ 4 \ 5)$,  ${ } \ \ (3 \ 4) \rangle$ }
             &    101   
             &  $ \underset{}{\wp_1^2} \ \underset{}{\wp_2^3} $
             & \multicolumn{1}{p{3.5cm}|}{$ x^5 - x^4 - 6 x^3 + x^2 + 18 x - 4 $} 
             \\ \hline
\multicolumn{1}{|c}{ $\gp{ (1 \ 2 \ 3 \ 4 ) } $ } &  
             \multicolumn{1}{p{2.3cm} | }{ $ \gp{  (1 \ 2 \ 3 \ 4 )  } $ }  
             &    269   
             &  $ \underset{}{\wp_1^4} \ \underset{}{\wp_2} $
             & \multicolumn{1}{p{3.5cm}|}{$ x^5 - x^4 - 15 x^3 - 11 x^2 + 11 x - 10 $} 
             \\ \hline
\multicolumn{1}{|c}{ $\gp{ (1 \ 2 \ 3 \ 4 ) } $ } &  
             \multicolumn{1}{p{2.3cm} | }{ $\langle (1 \ 2 \ 3 \ 4 ), \ \ \ $  ${ } \ \ (1 \ 3) \rangle$ } 
             &    619   
             &  $ \underset{}{\wp_1^4} \ \underset{}{\wp_2} $
             & \multicolumn{1}{p{3.5cm}|}{$ x^5 - x^4 - 13 x^3 - 6 x^2 + 8 x + 47 $} 
             \\ \hline
  \end{tabular}
\\[5pt] 
 \caption {Decomposition Configurations for $S_5$} \label{table:S5} 
 \end{center}
\end {table}

\begin{remark}
We note that a realization $K$ for the tame decomposition configuration $(S_n, \gp{(1 \, 2)},\gp{(1 \, 2)})$ gives a Galois
extension of the quadratic field $k = Q(\sqrt{(-1)^{(p-1)/2} p}$ with $\Gal(K/k) \isom A_n$ that is unramified over
$k$ at all finite primes and in which the prime over $p$ in $k$ splits completely.  
The existence of quadratic fields $k$ with class number 1 but which nevertheless have nonabelian Galois extensions
unramified outside finite primes was noted
by Artin, who gave the example $k = \QQ(\sqrt{19 \cdot 151})$.  The first realization
of the tame decomposition configuration $(S_5, \gp{(1 \, 2)},\gp{(1 \, 2)})$ 
where the class number of $k$ is 1 occurs for the splitting field of $ x^5 - 2 x^4 - 3 x^3 + 5 x^2 + x - 1$ where
$p = 36497$, and in this case $K$ is totally real, so $K/k$ is also unramified at the infinite primes. 

\end{remark}

\subsection{The group $PSL(2,7)$} \label{psl27}

The simple group $G = PSL(2,7) = GL_3(\FF_2)$ of order 168 has the 
presentation $G = \gp{a,b \mid a^2 = b^3 = (a b)^7 = [a,b]^4 = 1   }$ and an embedding in $S_7$ in which
$a =  (1 \ 2)(3 \ 6)$, $b = (2 \ 6 \ 7)(3 \ 4 \ 5)$, $a b = (1 \ 2 \ 3 \ 4 \ 5 \ 6 \ 7)$. The nonidentity elements in $G$
have orders 2,3,4 and 7. Let
$r = (1 \ 3 \ 2 \ 6)(5 \ 7) \in G$ and $s = (1 \ 2)(5 \ 7) \in G$.  Then $r^2 = a$ and $\gp{r,s} \isom D_8$,
the unique Sylow-2 subgroup containing $\gp{a}$ as a normal subgroup, is the normalizer in $G$ of both
$\gp{a}$ and $\gp{r}$. The normalizer in $G$ of $\gp{b}$ is $\gp{b, u} \isom S_3$ where $u = (2 \ 6)(4 \ 5)$ , and
the normalizer in $G$ of $a b$ is the Frobenius group of order 21, $\gp{ a b , v} $, where $v = (1 \ 2 \ 4)(3 \ 6 \ 5) $.
Up to isomorphism,
there are nine distinct tame decomposition configurations for $G$.  
In this case, the number field data base \cite{JR} provides realizations for the five configurations whose
inertia groups have order 2 or 4.  Realizations for the other possible inertia groups would involve
totally real fields (see \cite{JR2}), and there are substantially fewer of these available: of the approximately 100
totally real septic fields currently in \cite{JR} and \cite{LMFDB}, and the 138 currently in \cite{KM},
none are minimally tamely ramified at a single prime
with odd order inertia group.  The results for $PSL(2,7)$ are summarized in Table \ref{table:PSL27}, where
all primes have residue degree 1 unless otherwise indicated.

We note that for inertia group of order 3, there are examples in the databases that are tamely ramified at
two primes (e.g., at 5 with inertia of order 3, and at 6247 with inertia of order 2).  For
inertia group of order 7, the smallest number of ramified primes in the databases is three, with a single tamely
ramified example (ramified at 11 with inertia of order 7, at 5 with inertia of order 2, 
and at 19 with inertia of order 4).  

\begin {table}[ht] 
\begin{center}
  \begin{tabular}{c  c  | c  |  c | p{6.5cm} }
$T$ & $Z$ &  $p$ & \text{splitting} & \quad \qquad \text{realization} \\ \hline
\multicolumn{1}{|c}{$\gp{ a }$} &  
             \multicolumn{1}{p{1.2cm} | }{ $\gp{ a }$ }  
             &   6971            
             & $\underset{ }{\wp_1^2}  \ \underset{ }{\wp_2^2} \ \underset{ }{\wp_3} \ \underset{ }{\wp_4} \ \underset{ }{\wp_5}$
             & \multicolumn{1}{p{6.5cm}|}{$x^7 - 3 x^6 + 4 x^5 - 3 x^3 + 4 x^2 - x - 1$} 
             \\ \hline
\multicolumn{1}{|c}{$\gp{ a }$} &  
             \multicolumn{1}{p{1.2cm} | }{ $\gp{ r }$ }  
             &   3803
             & $\underset{(f=2)}{\wp_1^2}  \ \underset{(f=2)}{\wp_2} \ \underset{ }{\wp_3} $
             & \multicolumn{1}{p{6.5cm}|}{$x^7 - 2 x^5 - 4 x^4 - x^3 + 5 x^2 + 5 x + 1$} 
             \\ \hline
\multicolumn{1}{|c}{$\gp{ a }$} &  
             \multicolumn{1}{p{1.2cm} | }{ $\gp{ a, s }$ }  
             &   2741
             & $\underset{ }{\wp_1^2}  \ \underset{ }{\wp_2^2} \ \underset{(f=2)}{\wp_3} \ \underset{ }{\wp_4}$
             & \multicolumn{1}{p{6.5cm}|}{$x^7 - x^6 + 2 x^5 - 3 x^4 - 4 x^3 + 2 x^2 + 3 x + 1$} 
             \\ \hline
%
%
%
%
%
\multicolumn{1}{|c}{$\gp{ b }$} &  
             \multicolumn{1}{p{1.2cm} | }{ $\gp{ b }$ }  
             &    
             & $\underset{ }{\wp_1^3}  \ \underset{ }{\wp_2^3} \ \underset{ }{\wp_3} $
             & \multicolumn{1}{p{6.5cm}|}{unknown } 
             \\ \hline
\multicolumn{1}{|c}{$\gp{ b }$} &  
             \multicolumn{1}{p{1.2cm} | }{ $\gp{ b, u }$ }  
             &    
             & $\underset{ }{\wp_1^3}  \ \underset{ }{\wp_2^3} \ \underset{ }{\wp_3} $
             & \multicolumn{1}{p{6.5cm}|}{unknown } 
             \\ \hline
\multicolumn{1}{|c}{$\gp{ r }$} &  
             \multicolumn{1}{p{1.2cm} | }{ $\gp{ r }$ }  
             &   373
             &  $\underset{ }{\wp_1^4}  \ \underset{ }{\wp_2^2} \ \underset{ }{\wp_3} $
             & \multicolumn{1}{p{6.5cm}|}{$x^7 - x^6 - 10 x^5 + 14 x^4 + x^3 - 4 x^2 - 3 x + 5$} 
             \\ \hline
\multicolumn{1}{|c}{$\gp{ r }$} &  
             \multicolumn{1}{p{1.2cm} | }{ $\gp{ r,s }$ }  
             &   227
             & $\underset{ }{\wp_1^4}  \ \underset{ }{\wp_2^2} \ \underset{ }{\wp_3} $
             & \multicolumn{1}{p{6.5cm}|}{ $ x^7 + 2 x^5 - 4 x^4 - 5 x^3 - 4 x^2 - 3 x + 10 $ } 
             \\ \hline
\multicolumn{1}{|c}{$\gp{ a b }$} &  
             \multicolumn{1}{p{1.2cm} | }{ $\gp{ a b }$ }  
             &    
             & $\underset{ }{\wp_1^7}$ 
             & \multicolumn{1}{p{6.5cm}|}{ unknown } 
             \\ \hline
\multicolumn{1}{|c}{$\gp{ a b }$} &  
             \multicolumn{1}{p{1.2cm} | }{ $\gp{ a b , v} $ }
             &    
             & $\underset{ }{\wp_1^7}$ 
             & \multicolumn{1}{p{6.5cm}|}{ unknown } 
             \\ \hline
  \end{tabular}
\\[5pt] 
 \caption {Decomposition Configurations for $G = PSL(2,7) = GL_3(\FF_2)$} \label{table:PSL27} 
 \end{center}
\end {table}

\begin{remark}
The configurations $(G, \gp{ a }, \gp{ a, s } )$ and $(G, \gp{ a }, \gp{ a, s r^3 } )$ are isomorphic under the
outer automorphism of $G$ that maps $a$ to $a$ and $b$ to $b^{-1}$ (but are not isomorphic under an inner
automorphism), hence define the same tame decomposition configuration
for $G$.  We note that this configuration manifests itself in the two nonisomorphic (`sibling' and
arithmetically equivalent)
subfields of degree 7 in any realization as different splittings of the prime $p$ (making them appear at first to be distinct
decomposition configurations in the \cite{JR} database).  For example, the sibling septic to that in the table above for
the configuration $(G, \gp{ a }, \gp{ a, s })$ 
is the field defined by $f = x^7 - x^6 - 4 x^5 + x^4 + 4 x^3 - 3 x + 1$, in which the prime $p = 2741$ splits as
$\underset{(f=2)}{\wp_1^2}  \ \underset{ }{\wp_2} \ \underset{ }{\wp_3} \ \underset{ }{\wp_4}$.

\end{remark}

\section{Conclusion}

For finite abelian Galois groups, the only obstruction to obtaining a realization for a (minimal) tame 
decomposition configuration arises from constraints imposed by quadratic reciprocity and
for nonabelian groups of {\it odd} order we have no examples of minimal tame decomposition configurations 
that cannot be realized.  Our computations suggest, for example, that every tame decomposition 
configuration can be realized for groups normally generated by a single element, so in particular for
every finite simple group and for the symmetric groups $S_n$.   

\section{Acknowledgments}
We would like to thank Richard Foote for many helpful conversations.

\end{document}